\theoremstyle{plain}
\def\endproof{\hspace*{\fill}\mbox{\ \rule{.1in}{.1in}}\medskip }
\newtheorem{theorem}{Theorem}[section]
\newtheorem{lemma}[theorem]{Lemma}
\theoremstyle{definition}
\newtheorem{remark}[theorem]{Remark}
\numberwithin{equation}{section}
\numberwithin{figure}{section}
\begin{document}

\title[Convergence of equilibria of thin elastic shells]
{A note on convergence of low energy critical points \\
of nonlinear elasticity functionals,\\ 
for thin shells of arbitrary geometry}
\author{Marta Lewicka}
\address{Marta Lewicka, University of Minnesota, Department of Mathematics, 
206 Church St. S.E., Minneapolis, MN 55455}
\email{lewicka@math.umn.edu}
\subjclass{74K20, 74B20}
\keywords{shell theories, nonlinear elasticity, Gamma convergence, calculus of
  variations}

\date{\today}
\begin{abstract} 
We prove that the critical points of the $3$d nonlinear elasticity functional
on shells of small thickness $h$ and around the mid-surface $S$ of 
arbitrary geometry, converge as $h\to 0$
to the critical points of the von
K\'arm\'an functional on $S$, recently derived in \cite{lemopa1}.
This result extends the statement in \cite{MuPa}, derived for the case 
of plates when $S\subset\mathbb{R}^2$.
We further prove the same convergence result for the
weak solutions to the static equilibrium equations (formally the Euler-
Lagrange equations associated to the elasticity functional).
The convergences hold provided the elastic energy of the $3$d deformations scale 
like $h^4$ and the external body forces scale like $h^3$.
\end{abstract}

\maketitle
\tableofcontents

\section{Introduction and statement of the main results} 

Since the beginning of research in nonlinear elasticity, a major topic has 
been the derivation of lower dimensional theories,
appropriately approximating the three
dimensional theory on structures which are thin in one or more directions
(such as beams, rods, plates or shells).
Recently, the application of variational methods, notably the 
$\Gamma$-convergence \cite{dalmaso}, lead to many significant and rigorous results
in this setting \cite{LR1, FJMhier}. 
Roughly speaking, a $\Gamma$-limit approach guarantees the 
convergence of minimizers of a sequence of functionals, to the minimizers of 
the limit. However, it does not usually imply convergence 
of the possibly non-minimizing critical points
(the equilibria) and hence other tools must be applied 
to study this problem. 

In this note, following works  \cite{MoMu, MoMuS, MuPa} 
in which beams, rods and plates were analyzed, we study critical points 
of the $3$d nonlinear elasticity functional 
on a thin shell of arbitrary geometry, in the von K\'arm\'an scaling regime. 
A $\Gamma$-convergence result in this framework was recently derived
in \cite{lemopa1}, providing the natural from the minimization point of view
generalization of the von K\'arm\'an functional \cite{FJMhier} to shells. 
In analogy with the analysis done in 
\cite{MuPa} for plates, we now proceed to establish
convergence of weak solutions to the (formal) Euler-Lagrange 
equations (\ref{EL3d}),
as well as  convergence of critical points of the $3$d energy functionals 
(\ref{elastic-En}), to the critical points of the functional obtained
in \cite{lemopa1}. As pointed out in \cite{Ball} Problem 5,  
in general it is still unknown whether these 
two definitions of equilibria are equivalent.

\medskip

We now introduce the basic framework for our results.
We consider a $2$-dimensional surface $S$ embedded in $\mathbb{R}^3$, which is
compact, connected, oriented, of class $\mathcal{C}^{1,1}$,
and with boundary $\partial S$ being the union of finitely many 
(possibly none) Lipschitz curves.
A family $\{S^h\}_{h>0}$ of shells of small thickness $h$
around $S$ is given through:
$$S^h = \{z=x + t\vec n(x); ~ x\in S, ~ -h/2< t < h/2\},\qquad 0<h<h_0.$$
By $\vec n(x)$ we denote the unit normal to $S$, by $T_x S$ the tangent space,
and $\Pi(x) = \nabla \vec n(x)$ is the shape operator on $S$
(the negative second fundamental form). 
The projection onto $S$ along $\vec n$ is denoted by $\pi$.
We assume that $h<h_0$, with $h_0$ sufficiently small
to have $\pi$ defined on each $S^h$.

To a deformation $u\in W^{1,2}(S^h,\mathbb{R}^3)$ we associate its elastic
energy (scaled per unit thickness):
\begin{equation}\label{elastic-En}
E^h(u) = \frac{1}{h}\int_{S^h} W(\nabla u).
\end{equation} 
The stored energy density $W:\mathbb{R}^{3\times 3}\longrightarrow [0,\infty]$ 
is assumed to be $\mathcal{C}^2$ in a neighborhood of $SO(3)$, and to satisfy
the following normalization, frame indifference and nondegeneracy conditions:
\begin{equation}\label{W-assump}
\begin{split}
\forall F\in\mathbb{R}^{3\times 3}\quad \forall R\in SO(3)\qquad
& W(R) = 0, \quad W(RF) = W(F), \\
& W(F)\geq C \mathrm{dist}^2(F, SO(3)) 
\end{split}
\end{equation}
(with a uniform constant $C>0$). 
Our objective is to describe the limiting behavior, as
$h\to 0$, of critical points $u^h$ to the following total energy functionals:
\begin{equation}\label{total-intro}
J^h(u) = E^h(u) - \frac{1}{h}\int_{S^h} f^hu,
\end{equation} 
subject to external forces $f^h$, where we assume that:
$$f^h(x+t\vec n) = h\sqrt{e^h} f(x) \det(\mbox{Id}+t\Pi)^{-1},\quad
f\in L^2(S,\mathbb{R}^3) \mbox{ and } \int_S f =0.$$
Above, $e^h$ is a given sequence of positive
numbers obeying a prescribed scaling law.
It can be shown \cite{FJMhier, lemopa1} that if $f^h$ scale like $h^\alpha$, then 
the minimizers $u^h$ of (\ref{total-intro}) satisfy
$E^h(u^h)\sim h^\beta$ with $\beta= \alpha$ if $0 \le \alpha \le 2$ and $\beta
= 2\alpha -2$ if $\alpha > 2$. 
Throughout this paper we shall assume that $\beta\geq 4$, or more generally:
\begin{equation}\label{scaling-intro}
\lim_{h\to 0} e^h/h^4 =\kappa < +\infty,
\end{equation}
which for $S\subset\mathbb{R}^2$ corresponds to the von K\'arm\'an 
and the purely linear theories of plates, 
derived rigorously in \cite{FJMhier}.

\medskip

In our recent paper \cite{lemopa1}, the $\Gamma$-limit of $1/e^h J^h$
has been identified in the scaling range corresponding to
(\ref{scaling-intro}), and for arbitrary surfaces $S$. 
It turns out that the elastic energy scaling $E^h(u^h)\leq C e^h$ implies
that on $S$ the deformations $u^h_{\mid S}$ must be close to some rigid motion
$\bar Qx+c$, and that the first order term in the expansion of 
$\bar Q^T(u^h_{\mid S}-c) -\mbox{id}$ 
with respect to $h$, is an element  $V$ of the 
class $\mathcal V$ of {\em infinitesimal isometries} on $S$ \cite{Spivak}. 
The space $\mathcal{V}$ consists of vector fields
$V\in W^{2,2}(S,\mathbb{R}^3)$ for whom there 
exists  a matrix field $A\in W^{1,2}(S,\mathbb{R}^{3\times 3})$ so that:
\begin{equation}\label{Adef-intro} 
\partial_\tau V(x) = A(x)\tau \quad \mbox{and} \quad  A(x)^T= -A(x) \qquad 
\forall {\rm{a.e.}} \,\, x\in S \quad \forall \tau\in T_x S.
\end{equation}
Equivalently, the change of metric on $S$ induced by the
deformation $\mbox{id} + h V$ is at most of order $h^2$, 
for each $V\in\mathcal{V}$.

When in (\ref{scaling-intro}) $\kappa=0$, the limiting total energy 
is given by:
\begin{equation}\label{I-intro}
J(V,\bar Q)= \frac{1}{24} \int_S 
\mathcal{Q}_2\left(x,(\nabla(A\vec{n}) - A\Pi)_{tan}\right)~\mbox{d}x
- \int_S f\cdot \bar QV ~\mbox{d}x,
\qquad \forall V\in\mathcal{V},~~ \bar Q\in SO(3).
\end{equation} 
The first term above measures the first order change 
in the second fundamental form $\Pi$ of $S$, produced by $V$. 
The quadratic forms $\mathcal{Q}_2(x,\cdot)$ are given as follows:
$$ \mathcal{Q}_2(x, F_{tan}) = \min\{\mathcal{Q}_3(\tilde F); ~~ (\tilde F -
F)_{tan} = 0\}, \qquad \mathcal{Q}_3(F) = D^2 W(\mbox{Id})(F,F).$$
The form $\mathcal{Q}_3$ is defined for all $F\in\mathbb{R}^{3\times 3}$, 
while $\mathcal{Q}_2(x,\cdot)$  for a given $x\in S$, is defined on tangential 
minors $F_{tan}$ of such matrices.    
Both forms depend only on the symmetric parts of their arguments
and are positive definite on the space of symmetric matrices \cite{FJMgeo}. 
In the weak formulation of the Euler-Lagrange equations of (\ref{I-intro})
one naturally encounters the linear operators $\mathcal{L}_3$ and 
$\mathcal{L}_2(x,\cdot)$, defined on matrix spaces $\mathbb{R}^{3\times 3}$ and
$\mathbb{R}^{2\times 2}$ respectively, given by:
$$\forall F\in \mathbb{R}^{3\times 3}\qquad
\mathcal{Q}_3(F) = \mathcal{L}_3 F: F \quad \mbox{ and }\quad
\mathcal{Q}_2(x,F_{tan}) = \mathcal{L}_2(x, F_{tan}): F_{tan}.$$

For $\kappa>0$, the $\Gamma$-limit (which is the generalization of 
the von K\'arm\'an functional \cite{FJMhier} to shells), 
contains also a stretching term, measuring the total second order change in the
metric of $S$:
\begin{equation}\label{vonKarman}
J^{vK}(V,B_{tan},\bar Q)= \frac{1}{2} 
\int_S \mathcal{Q}_2\left(x,B_{tan} - \frac{\kappa}{2} (A^2)_{tan}\right)
+ \frac{1}{24} \int_S \mathcal{Q}_2\left(x,(\nabla(A\vec n) -
  A\Pi)_{tan}\right)
- \int_S f\cdot \bar QV.
\end{equation}
It involves a symmetric matrix field $B_{tan}$ belonging to the 
{\em finite strain space}: 
$$ \mathcal{B} = \mbox{cl}_{L^2(S)}\Big\{\mathrm{sym }\nabla w^h; 
~~ w^h\in W^{1,2}(S,\mathbb{R}^3)\Big\}. $$ 
The two terms in (\ref{vonKarman}) correspond,
in appearing order, to the stretching and bending energies of a sequence of
deformations $v^h = \mbox{id} + hV + h^2 w^h$ of $S$,
which is induced by a first order displacement $V\in\mathcal{V}$ and second order
displacements $w^h$ satisfying $\lim_{h\to 0}\mbox{sym} \nabla w^h = B_{tan}$.
The crucial property  of (\ref{vonKarman}) is the one-to-one
correspondence between the minimizing sequences $u^h$ of the total energies
$J^h(u^h)$, and their approximations (modulo rigid
motions $\bar Qx+c$) given by $v^h$ as above with $(V, B_{tan}, \bar Q)$ minimizing
$J^{vK}$, or $(V,\bar Q)$ minimizing $J$ when $\kappa=0$. 

\medskip

The purpose of this paper is to show that under the following extra 
assumption of \cite{MuPa}:
\begin{equation}\label{W-nonphys2}
\forall F\in\mathbb{R}^{3\times 3} \quad |DW(F)| \leq C(|F|+1).
\end{equation}
 also the equilibria (possibly non-minimizing) 
of (\ref{elastic-En}) converge to the equilibria of (\ref{vonKarman})
or (\ref{I-intro}).
The definition of an equilibrium of the $3$d energy $J^h$  may be
understood in two different manners, corresponding to passing with 
the scaling $\epsilon$ of a variation $\phi$ to $0$ 
outside or inside the integral sign.  
Namely, for a fixed $h>0$, we may require that: 
\begin{equation}\label{equilib}
\forall \phi^h\in W^{1,2}(S^h,\mathbb{R}^3) \qquad \lim_{\epsilon\to 0} 
\frac{1}{\epsilon} 
\left(J^h(u^h + \epsilon \phi^h) - J^h(u^h)\right) = 0.
\end{equation}
or that:
\begin{equation}\label{equilib_form}
\forall \phi^h\in W^{1,2}(S^h,\mathbb{R}^3) \qquad 
\int_{S^h} DW(\nabla u^h):\nabla \phi^h = \int_{S^h} f^h \phi^h.
\end{equation}
The last condition is obtained by formal passing to the limit $\epsilon\to 0$
under the integral sign in (\ref{equilib}).
Integrating by parts  we also see that (\ref{equilib_form})
is the weak formulation of the following fundamental balance law \cite{Ball}:
\begin{equation}\label{EL3d} 
\mbox{div } [DW(\nabla u^h)] + f^h = 0 \mbox{ in } S^h, \qquad 
DW(\nabla u^h)\vec n = 0 \mbox{ on } \partial S^h, 
\end{equation}
where the operator $\mbox{div}$ above is understood as acting 
on rows of the matrix field $DW(\nabla u^h)$.
Whether the two definitions of equilibria (\ref{equilib}) and
(\ref{equilib_form}) are equivalent, even for local minimizers 
(without assuming extra regularity, e.g. their Lipschitz continuity) 
is an open problem of nonlinear elasticity, listed by Ball as Problem 5
in \cite{Ball}.

It turns out that the main convergence result described below follows with 
either  (\ref{equilib}) or (\ref{equilib_form}).  
The reason is that the difference between these two definitions 
(after an appropriate scaling), converges to $0$ with $h$, 
along particular sequences of variations $\phi^h$, 
which are however exactly the $3$d variations recovered from the
the variations of the $2$d functional $J^{vK}$ or $J$.

\begin{theorem}\label{th-main}
Assume (\ref{W-assump}) and (\ref{W-nonphys2}). Let $u^h\in W^{1,2}(S^h,
\mathbb{R}^3)$ be a sequence of deformations, satisfying:
\begin{itemize}
\item[(a)] the equilibrium equations (\ref{equilib_form}) hold,
\item[(b)] $E^h(u^h)\leq C e^h$, where $e^h$ is the scaling with
(\ref{scaling-intro}). 
\end{itemize}
Then there exist a sequence $Q^h\in SO(3)$, converging (up to a subsequence)
to some $\bar Q\in SO(3)$, and $c^h\in\mathbb{R}^3$ 
such that for the normalized rescaled deformations:
$$y^h(x+t\vec{n}) = (Q^h)^T u^h(x+h/h_0 t\vec{n}) - c^h$$ 
defined on the common domain $S^{h_0}$, we have:
\begin{enumerate}
\item[(i)] $y^h$ converge in $W^{1,2}(S^{h_0})$ to $\pi$.
\item[(ii)]  The scaled average displacements:
\begin{equation}\label{Vh-intro}
V^h(x) = \frac{h}{\sqrt{e^h}} \fint_{-h_0/2}^{h_0/2}
y^h(x+t\vec{n}) - x ~\mathrm{d}t
\end{equation} 
converge (up to a subsequence) in $W^{1,2}(S)$ to some $V\in \mathcal{V}$.
\item[(iii)] ${h}/{\sqrt{e^h}} ~\mathrm{sym }~\nabla V^h$ converge
(up to a subsequence) in $L^{2}(S)$ to some $B_{tan}\in\mathcal{B}$.
\item[(iv)] The triple $(V,B_{tan}, \bar Q)$ satisfies the Euler-Lagrange
equations of the functional $J^{vK}$. That is, 
for all $\tilde V\in \mathcal{V}$ with
$\tilde A = \nabla \tilde V$ given as in the formula (\ref{Adef-intro}), 
and all $\tilde B_{tan}\in\mathcal{B}$, 
there holds:
\begin{equation}\label{EL1}
\int_S \mathcal{L}_2\left(x, B_{tan} -\frac{\kappa}{2}(A^2)_{tan}\right)
: \tilde B_{tan} = 0,
\end{equation}
\begin{equation}\label{EL2} 
\begin{split}
&-\kappa \int_S \mathcal{L}_2\left(x, B_{tan}-\frac{\kappa}{2}
  (A^2)_{tan}\right) : (A\tilde A)_{tan}\\
& \qquad\qquad\qquad 
+ \frac{1}{12} \int_S \mathcal{L}_2\left(x, (\nabla(A\vec n) - A\Pi)_{tan}\right)
: (\nabla(\tilde A\vec n) - \tilde A\Pi)_{tan}
= \int_S f \cdot \bar Q \tilde V,
\end{split}
\end{equation}
When $\kappa=0$ then the couple $(V,\bar Q)$ satisfies (\ref{EL2}) 
for all $\tilde V\in\mathcal{V}$, which is the Euler-Lagrange equations of 
the functional (\ref{I-intro}).
\end{enumerate}
\end{theorem}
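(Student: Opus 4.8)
The plan is to obtain the compactness statements (i)--(iii) directly from the energy bound (b), exactly as in \cite{lemopa1} --- they do not use the equilibrium equations --- and then to derive (iv), following the scheme of \cite{MuPa}, by testing the weak Euler--Lagrange identity (\ref{equilib_form}) against the first variations of the $\Gamma$-convergence recovery sequences of \cite{lemopa1}. For (i)--(iii): the geometric rigidity estimate on thin shells together with $E^h(u^h)\le Ce^h$ yields the rotations $Q^h$, the translations $c^h$, the convergence $y^h\to\pi$ in $W^{1,2}(S^{h_0})$, and, up to a subsequence, the limits $V\in\mathcal V$ and $B_{tan}\in\mathcal B$; writing, on the set where $\nabla u^h$ is close to $SO(3)$, the polar decomposition $\nabla u^h=R^h\hat U^h$ ($\hat U^h$ the right stretch, $R^h$ a rotation with $R^h\approx Q^h(\mathrm{Id}+\tfrac{\sqrt{e^h}}{h}A)$), the same analysis provides the weak $L^2(S^{h_0})$ limit $\mathcal G$ of the rescaled normalized strains $\mathcal G^h=\tfrac{1}{\sqrt{e^h}}(\hat U^h-\mathrm{Id})$, whose tangential part is affine in the thickness variable $t$, with $t$-independent part $B_{tan}-\tfrac{\kappa}{2}(A^2)_{tan}$ and $t$-derivative a fixed multiple of $(\nabla(A\vec n)-A\Pi)_{tan}$, the remaining entries being unconstrained. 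All of this is inherited from \cite{lemopa1}; what remains is (iv).

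For (iv), I would fix $\tilde V\in\mathcal V$ with $\tilde A=\nabla\tilde V$ and $\tilde B_{tan}\in\mathcal B$, and --- after a density reduction letting us take $\tilde V$ and the approximating fields $w^h$ (with $\mathrm{sym}\,\nabla w^h\to\tilde B_{tan}$) smooth enough --- test (\ref{equilib_form}) against
\[
\phi^h(x+s\vec n)=Q^h\Big(\tfrac{\sqrt{e^h}}{h}\big(\tilde V(x)+s\,\tilde A(x)\vec n(x)\big)\ +\ \text{(second-order warping in $x$ and $s$)}\Big),
\]
where the second-order terms are the derivative with respect to $(V,B_{tan})$ of the corresponding terms in the recovery sequence of \cite{lemopa1}, chosen so that $\tfrac{1}{\sqrt{e^h}}\mathrm{sym}\big((R^h)^T\nabla\phi^h\big)$ converges in $L^2$ to the optimal extension --- i.e. the matrix realizing the minimum defining $\mathcal Q_2(x,\cdot)$ --- of the tangential field which is affine in $t$ with $t$-independent part $\tilde B_{tan}-\kappa\,\mathrm{sym}(A\tilde A)_{tan}$ and $t$-derivative a fixed multiple of $(\nabla(\tilde A\vec n)-\tilde A\Pi)_{tan}$; here $-\kappa\,\mathrm{sym}(A\tilde A)_{tan}$ is the derivative of $\tfrac{\kappa}{2}(A^2)_{tan}$ along $\tilde V$, and when $\kappa=0$ it together with the $w^h$--part is dropped. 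The reduction also guarantees $\|\nabla\phi^h\|_{L^\infty(S^h)}\le C\sqrt{e^h}/h$.

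One then inserts $\phi^h$ into (\ref{equilib_form}) and divides by $he^h$. On the right-hand side, the prescribed form of $f^h$ cancels the volume element of $S^h$ and $\phi^h\approx Q^h\tfrac{\sqrt{e^h}}{h}\tilde V$, so $\tfrac{1}{he^h}\int_{S^h}f^h\phi^h\to\int_S f\cdot\bar Q\tilde V$ (vanishing if $\tilde V=0$). On the left-hand side, on the ``good'' set $\{|\nabla u^h-R^h|\le\delta\}$ frame indifference gives $DW(\nabla u^h):\nabla\phi^h=DW(\hat U^h):K^h$ with $K^h=(R^h)^T\nabla\phi^h$; write $K^h=\mathrm{skew}\,K^h+\mathrm{sym}\,K^h$. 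The skew part is handled by the frame-indifference bound $|DW(\hat U^h):G|\le C\,\mathrm{dist}^2(\nabla u^h,SO(3))\,|G|$, valid for every skew $G$ (because $W$ depends only on $F^TF$ and $DW(\mathrm{Id})=0$, so that $\mathcal L_3(\hat U^h-\mathrm{Id})$ is symmetric and annihilates skew matrices while $DW(\hat U^h):G$ is a commutator-type quantity of order $\mathrm{dist}^2$): since $\int_{S^h}\mathrm{dist}^2(\nabla u^h,SO(3))\le Che^h$ and $\|K^h\|_{L^\infty}\le C\sqrt{e^h}/h$, its contribution is $O(\sqrt{e^h}/h)=o(1)$. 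For the symmetric part, the expansion $DW(\hat U^h)=\sqrt{e^h}\,\mathcal L_3\mathcal G^h+\mathcal R^h$ with $|\mathcal R^h|\le\omega(\delta)\sqrt{e^h}\,|\mathcal G^h|$, $\omega(\delta)\to0$, leaves a remainder of order $O(\omega(\delta))$ (as $\mathrm{sym}\,K^h=O(\sqrt{e^h})$) plus the main term $\tfrac{1}{he^h}\int_{S^h}\sqrt{e^h}\,\mathcal L_3\mathcal G^h:\mathrm{sym}\,K^h$, which --- by $\mathcal G^h\rightharpoonup\mathcal G$ weakly, the strong convergence of $\tfrac{1}{\sqrt{e^h}}\mathrm{sym}\,K^h$ to the said optimal extension, and the elementary identity, valid for optimal extensions, that contracting $\mathcal L_3 M$ with the optimal extension of a tangential matrix $N$ equals $\mathcal L_2(x,M_{tan}):N$ --- converges to
\[
\int_S\mathcal L_2\Big(x,B_{tan}-\tfrac{\kappa}{2}(A^2)_{tan}\Big):\big(\tilde B_{tan}-\kappa\,\mathrm{sym}(A\tilde A)_{tan}\big)+\frac{1}{12}\int_S\mathcal L_2\Big(x,(\nabla(A\vec n)-A\Pi)_{tan}\Big):(\nabla(\tilde A\vec n)-\tilde A\Pi)_{tan},
\]
the two terms being the contributions of the $t$-independent (membrane) and $t$-linear (bending) parts of the strains after averaging over the thickness, the factor $\tfrac{1}{12}$ coming from $\int t^2$. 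Equating the two sides: $\tilde V=0$ gives (\ref{EL1}); $\tilde B_{tan}=0$ gives (\ref{EL2}); and for $\kappa=0$ --- where (\ref{EL1}) forces $B_{tan}=0$ --- equation (\ref{EL2}) becomes the Euler--Lagrange equation of (\ref{I-intro}).

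The hard part is the estimate on the complementary set $\{|\nabla u^h-R^h|>\delta\}$, where the Taylor expansion fails: there one uses the nonphysical growth bound (\ref{W-nonphys2}) --- the only place it enters --- giving $|DW(\nabla u^h)|\le C(1+\mathrm{dist}(\nabla u^h,SO(3)))$, whence $\int_{\{|\nabla u^h-R^h|>\delta\}}|DW(\nabla u^h)|\le Che^h/\delta^2$ (Chebyshev and Cauchy--Schwarz, using $\int_{S^h}|\nabla u^h-R^h|^2\le Che^h$ from rigidity), so that this contribution is at most $\tfrac{1}{he^h}\|\nabla\phi^h\|_{L^\infty}\cdot Che^h/\delta^2=O\big(\sqrt{e^h}/(h\delta^2)\big)$, which tends to $0$ for fixed $\delta$ because $\sqrt{e^h}/h\to0$ under (\ref{scaling-intro}); one then lets $h\to0$ first and $\delta\to0$ afterwards. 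The other delicate points are the density approximation of $\tilde V$ and $w^h$ by smooth fields (needed for the $L^\infty$ bound on $\nabla\phi^h$) and the identification, borrowed from \cite{lemopa1}, of the limiting strain together with the bookkeeping of the $\kappa$-dependent cross terms. Finally, the argument is insensitive to which notion of equilibrium one starts from: for the bounded test fields $\phi^h$ above, $\tfrac{1}{\epsilon he^h}\big(J^h(u^h+\epsilon\phi^h)-J^h(u^h)\big)$ and $\tfrac{1}{he^h}\int_{S^h}\big(DW(\nabla u^h):\nabla\phi^h-f^h\phi^h\big)$ differ by a quantity vanishing as $h\to0$, so (\ref{EL1})--(\ref{EL2}) follow from (\ref{equilib}) as well as from (\ref{equilib_form}).
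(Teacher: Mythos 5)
Your proposal is correct in its conclusions and sound in its main ideas, but it takes a genuinely different route from the paper, and it is worth spelling out the trade-off.

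The paper's proof of (iv) tests (\ref{equilib_form}), rewritten in the fixed domain $S^{h_0}$ as (\ref{equilib6}), against \emph{three simple and unmodified variations} --- $\psi=\phi(x)$, $\psi=t\tilde A\vec n(x)$, and $\psi=\tilde V(x)$ --- dividing by $h$, $h$ and $h^2$ respectively, and then reads off (\ref{EL1}) and (\ref{EL2}) separately. The machinery that makes this work is a set of structural properties of the limiting stress $E=\mathcal L_3G$: that $E\vec n=0$ (Lemma \ref{Eprop}(i), itself obtained by testing against $\psi$ with $\partial_{\vec n}\psi$ arbitrary), that $E$ is symmetric with $\|\mathrm{skew}\,E^h\|_{L^p}=o(h)$ (Lemma \ref{Ehsym}, from the exact identity $DW(F)F^T\in\mathrm{sym}$ rather than a Taylor expansion), and that $E_{tan}=\mathcal L_2(\cdot,G_{tan})$ (Lemma \ref{Eprop}(iii)). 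Because $E\vec n=0$ and $E$ is symmetric, the normal components of the test strain drop out automatically, so \emph{no warping term and no smoothing of $\tilde V$ or $\tilde A$ are required}: the test fields live directly in $W^{1,2}(S^{h_0})$. The growth assumption (\ref{W-nonphys2}) enters once, via Lemma \ref{conv2}(i) (Proposition 2.3 of \cite{MuPa}), to establish the weak $L^2$ convergence $E^h\rightharpoonup E$.

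Your proof instead constructs a single composite variation $\phi^h$ mimicking the derivative of the recovery sequence of \cite{lemopa1}, complete with second-order warping chosen so that $\tfrac{1}{\sqrt{e^h}}\mathrm{sym}\big((R^h)^T\nabla\phi^h\big)$ converges strongly to the \emph{optimal extension} of the target tangential strain, divides by $he^h$, and invokes the algebraic duality $\mathcal L_3M:\tilde N=\mathcal L_2(x,M_{tan}):N_{tan}$ (valid when $\tilde N$ is the optimal extension of $N_{tan}$). This is dual to the paper's route: you make the \emph{test} strain optimal so that the unknown normal components of $G$ are irrelevant, whereas the paper shows that $E\vec n=0$, which is the statement that the \emph{limit} strain $G$ is itself the optimal extension of $G_{tan}$. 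Both close the loop, but your route pays for avoiding Lemma \ref{Eprop}(i)--(ii) by requiring the existence and $L^\infty$-bounded gradients of the warping corrections (hence the density/smoothing step) and a weak--strong convergence argument, and it requires that your single master identity be separated afterwards by setting $\tilde V=0$ and $\tilde B_{tan}=0$. Your treatment of the skew part via the Taylor expansion $\mathrm{skew}\,DW(\hat U^h)=O(\mathrm{dist}^2)$ on the good set is legitimate but different from the paper's exact identity; your bad-set estimate via (\ref{W-nonphys2}) plus Chebyshev/Cauchy--Schwarz matches the spirit of the paper, although there (\ref{W-nonphys2}) is applied earlier, inside Lemma \ref{conv2}, rather than in a good/bad split.

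Two points deserve care if you push this through in detail. First, the existence of $\phi^h$ with $\tfrac{1}{\sqrt{e^h}}\mathrm{sym}\,K^h\to\tilde N$ strongly in $L^2$ requires that the normal derivatives of the warping reproduce the $i3$, $33$ entries of the optimal extension, while the tangential derivatives of the warping are $O(h)$ and hence negligible: since these entries depend on $A\in W^{1,2}$, this forces the smoothing argument you mention, and one must verify that the smoothing error commutes with both the $h\to 0$ limit and the $\delta\to0$ limit. Second, your combined limit identity must reproduce the precise $\kappa$-dependent cross term $-\kappa(A\tilde A)_{tan}$; this coefficient comes from the product of $\tfrac{1}{h}((Q^h)^TR^h-\mathrm{Id})$ with $\tilde A$, exactly as in the paper's term $I_h$ in (\ref{tre}), and the bookkeeping of $\sqrt{e^h}/h$, $\sqrt{e^h}/h^2$ and the two scaled displacement orders is where slips most easily occur; you should verify it against (\ref{Ih}). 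Neither issue is a conceptual gap, but both are places where the paper's moment-based argument is cleaner.
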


\begin{theorem}\label{th-main2}
Theorem \ref{th-main} remains true if in the assumption (a) the formal equilibrium
equation (\ref{equilib_form}) are replaced by the critical point condition
(\ref{equilib}).
\end{theorem}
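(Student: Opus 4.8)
The plan is to reduce Theorem~\ref{th-main2} to Theorem~\ref{th-main} by showing that, along the specific family of $3$d variations which are actually used to extract the limiting Euler--Lagrange equations \eqref{EL1}--\eqref{EL2}, the two notions of criticality \eqref{equilib} and \eqref{equilib_form} differ by a quantity that vanishes in the limit $h\to 0$. Concretely, for a test field $\phi^h\in W^{1,2}(S^h,\mathbb{R}^3)$ one has, by a Taylor expansion of $W$ around $\nabla u^h$ and the $\mathcal{C}^2$ regularity of $W$ near $SO(3)$ together with the growth bound \eqref{W-nonphys2},
\begin{equation*}
\lim_{\epsilon\to 0}\frac{1}{\epsilon}\big(J^h(u^h+\epsilon\phi^h)-J^h(u^h)\big)
= \int_{S^h} DW(\nabla u^h):\nabla\phi^h - \int_{S^h} f^h\phi^h,
\end{equation*}
\emph{provided} the difference quotient can be passed to the limit under the integral sign. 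So \eqref{equilib} always implies \eqref{equilib_form} once this interchange is justified; the real content is that a deformation satisfying \eqref{equilib} satisfies \eqref{equilib_form} \emph{for the class of test fields needed in the proof of Theorem~\ref{th-main}}, which is all that that proof uses.

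The first step is therefore to identify precisely which $3$d variations $\phi^h$ are fed into \eqref{equilib_form} in the proof of Theorem~\ref{th-main}. These are the standard recovery-sequence variations for $J^{vK}$: given $\tilde V\in\mathcal{V}$ with $\tilde A=\nabla\tilde V$ and $\tilde B_{tan}\in\mathcal{B}$, one builds $\phi^h(x+t\vec n)$ out of $\tilde V(x)$, a linear-in-$t$ correction involving $\tilde A\vec n$, and second-order terms involving a $W^{1,2}$ field realizing $\tilde B_{tan}$, together with the warping field $d(x)$ coming from the minimization defining $\mathcal{Q}_2$ from $\mathcal{Q}_3$. After the change of variables to the fixed domain $S^{h_0}$ and the appropriate $\e^h$-rescaling, these $\phi^h$ are \emph{bounded in $W^{1,\infty}$}, uniformly in $h$ (they are smooth in $t$ and inherit the regularity of $\tilde V$, $\tilde A$, $d$, truncated if necessary), and in particular $\|\nabla\phi^h\|_{L^\infty(S^h)}\le C$.

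The second, and main, step is the uniform-integrability estimate that legitimizes passing the difference quotient under the integral. Writing $g^h_\e(z)=\frac{1}{\e}\big(W(\nabla u^h+\e\nabla\phi^h)-W(\nabla u^h)\big)$, the mean value theorem gives $|g^h_\e(z)|\le |DW(\nabla u^h+\theta\e\nabla\phi^h)|\,|\nabla\phi^h|$ for some $\theta\in(0,1)$, and then \eqref{W-nonphys2} yields $|g^h_\e(z)|\le C(|\nabla u^h(z)|+\e|\nabla\phi^h(z)|+1)|\nabla\phi^h(z)|$. Using $\|\nabla\phi^h\|_{L^\infty}\le C$ and assumption~(b), which via the nondegeneracy in \eqref{W-assump} gives $\|\nabla u^h\|_{L^2(S^h)}^2\le C\,h\,e^h + C|S^h|$ (the deformations are $W^{1,2}$-close to a rigid motion at the relevant scale), one gets a bound on $g^h_\e$ by an $L^1(S^h)$ function independent of $\e$. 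Dominated convergence then gives, for \emph{each fixed} $h$,
\begin{equation*}
\lim_{\e\to 0}\frac{1}{\e}\big(E^h(u^h+\e\phi^h)-E^h(u^h)\big)
= \frac{1}{h}\int_{S^h} DW(\nabla u^h):\nabla\phi^h,
\end{equation*}
and the forcing term is already linear in the variation, so \eqref{equilib} forces \eqref{equilib_form} to hold for all such $\phi^h$. Feeding exactly these identities into the argument of Theorem~\ref{th-main} --- which only ever tests \eqref{equilib_form} against $\phi^h$ of this recovery-sequence type --- reproduces \eqref{EL1}--\eqref{EL2} verbatim, and all of (i)--(iv) follow unchanged.

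The step I expect to be delicate is not any single estimate but the bookkeeping: one must make sure the recovery variations $\phi^h$ really can be taken $W^{1,\infty}$-bounded after rescaling (a density/truncation argument is needed because $\mathcal{B}$ is only an $L^2$-closure and $\tilde B_{tan}$ need not come from a Lipschitz $w$), and that the $L^\infty$ bound is uniform in $h$ even though $\phi^h$ carries factors like $t/h_0$ and derivatives in $t$ of order $O(1)$ on the fixed domain. Once the variations are controlled in $W^{1,\infty}$ uniformly, the domination argument is routine and the equivalence of the two criticality notions \emph{along these variations} --- which is all Theorem~\ref{th-main} needs --- is immediate; no new compactness or lower-semicontinuity input is required beyond what Theorem~\ref{th-main} already supplies.
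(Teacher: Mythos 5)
Your approach differs from the paper's, and as written it contains a genuine gap in the form of the $W^{1,\infty}$ claim, which is false. First, a correction about bookkeeping: the proof of Theorem~\ref{th-main} does \emph{not} test (\ref{equilib_form}) against full recovery-sequence variations built from $\tilde V$, $\tilde A$, $\tilde B_{tan}$, and a warping field $d$. It tests against three very simple families (after the change of variables (\ref{changevar})): $\psi(x+t\vec n)=\phi(x)$ with $\phi\in W^{1,2}(S)$; $\psi(x+t\vec n)=t\tilde A\vec n(x)$ with $\tilde A\vec n\in W^{1,2}(S)$; and $\psi(x+t\vec n)=\tilde V(x)$ with $\tilde V\in W^{2,2}(S)$. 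None of these is Lipschitz: $\nabla\phi$, $\nabla(\tilde A\vec n)$, $\nabla\tilde V$ lie in $L^2$ or $W^{1,2}$ but not $L^\infty$; and more importantly $\nabla\phi^h(x+t\vec n)=\nabla\psi(x+th_0/h\,\vec n)P(x+t\vec n)$ contains the factor $\vec n^TP\vec n=h_0/h$, so $\|\nabla\phi^h\|_{L^\infty(S^h)}$ blows up with $h$ even when $\psi$ is smooth. So there is no uniform $W^{1,\infty}$ bound to invoke, and no truncation argument is going to supply one because the $h_0/h$ scaling is intrinsic to the rescaled variations.

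Second, and more fundamentally, the paper deliberately avoids the ``for each fixed $h$, pass the limit under the integral sign'' route you propose, precisely because of Ball's Problem~5 (quoted explicitly in the introduction): whether (\ref{equilib}) and (\ref{equilib_form}) are equivalent is open, and the paper is written so as not to presume it. What Lemma~\ref{general} and its companion lemma actually prove is weaker and more delicate: they show that the \emph{rescaled} error
\[
\frac{1}{h^k\sqrt{e^h}}\,\mathcal{E}'_h(\psi), \qquad
\mathcal{E}'_h(\psi)=\lim_{\epsilon\to 0}\int_{S^h}\Big[\tfrac{1}{\epsilon}\big(W(\nabla u^h+\epsilon\nabla\phi^h)-W(\nabla u^h)\big)-DW(\nabla u^h):\nabla\phi^h\Big],
\]
with $k=0,1,2$ depending on the test function, tends to $0$ as $h\to 0$; it is \emph{not} shown that $\mathcal{E}'_h(\psi)=0$ for fixed $h$. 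The mechanism is a good/bad set decomposition $\Omega_{h,\epsilon}=\{\mathrm{dist}(\nabla u^h,SO(3))<\delta,\ \epsilon|\nabla\phi^h|<\delta\}$: on the good set the $\mathcal{C}^2$ regularity of $W$ near $SO(3)$ gives a term $O(\epsilon)$; on the bad set one uses (\ref{W-nonphys2}) together with the Cauchy--Schwarz estimate (\ref{m2}) and, crucially, the measure estimate (\ref{m3})--(\ref{m5}) showing $|\omega_{h,\epsilon}|\to 0$ in the iterated limit thanks to the energy bound $E^h(u^h)\leq Ce^h$ and the choice of test functions in $W^{2,2}$ (so that $\partial_{\vec n}\psi,\nabla\tilde V\in L^6$). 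Your proposal never engages with this bad-set measure estimate, which is the actual content of the proof, and replaces it with a domination argument that both relies on a false $L^\infty$ bound and quietly asserts the stronger statement that (\ref{equilib}) implies (\ref{equilib_form}) along the relevant variations for fixed $h$, which the paper pointedly does not claim.
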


We prove Theorem \ref{th-main} in section \ref{mainproof}
and Theorem \ref{th-main2} in section \ref{sec_observation}.
In section \ref{sec_3rdEL} we derive the third Euler-Lagrange equation
(after the first two (\ref{EL1}) and (\ref{EL2})), corresponding
to variation in $\bar Q\in SO(3)$.
We first notice that the limiting $\bar Q$ necessarily satisfies the
constraint of the average torque
$\tau(\bar Q) = \int_S f\times \bar Qx ~\mbox{d}x$ being $0$.
The main difficulty arises now from the fact that the variations must be taken 
inside $SO(3)$ in a way that this constraint remains satisfied.
Assuming that such variations exist, we establish the limit equation under the 
nondegeneracy condition that $Q^h$ approach $\bar Q$ along a direction 
$U\in T_{\bar Q}SO(3)$ for which $\partial_U \tau(\bar Q)\neq 0$.

\begin{remark}
Condition (\ref{W-nonphys2}) of \cite{MuPa} 
%unfortunately contradicting 
%the physical requirement that $W(F)\to\infty$ when $\det F \to 0$ \cite{Ball}, 
is of technical importance.
Notice that, in view of (\ref{W-assump}) resulting in 
$DW(F)=0$ for all $F\in SO(3)$, (\ref{W-nonphys2}) 
is equivalent to:
\begin{equation*}%\label{W-nonphys2}
\forall F\in\mathbb{R}^{3\times 3} \quad |DW(F)| \leq C \mbox{dist}(F, SO(3)).
\end{equation*}
Using the last assumption in (\ref{W-assump}), the above implies that:
$|DW(F)| \leq C W(F)^{1/2}$ for all $F\in\mathbb{R}^{3\times 3}$.
Hence,  roughly speaking, $W$ has a quadratic growth
and we see that (\ref{W-nonphys2}) is actually quite restrictive.
Independent from our research, Mora and Scardia \cite{MoSc} 
has presently established a result complementary
to ours where the requirement (\ref{W-nonphys2}) is relaxed, 
while the equilibrium condition of (\ref{total-intro}) is understood
in a different manner, related to Ball's inner variations and the 
Cauchy stress balance law \cite{Ball}. 
\end{remark}

\bigskip

\noindent{\bf Acknowledgments.} 
This work was partially supported by the NSF grant DMS-0707275 
and by the Center for Nonlinear Analysis (CNA) under
the NSF grants 0405343 and 0635983.

\section{Convergence of weak solutions to the 
  Euler-Lagrange equations (equilibria) of the $3$d energies}
\label{mainproof}

We first gather the relevant information from \cite{lemopa1}:

\begin{lemma}\label{approx}\cite{lemopa1}
Let $u^h\in W^{1,2}(S^h,\mathbb{R}^3)$ be a sequence of deformations 
of shells $S^h$.  Assume (\ref{scaling-intro}) 
and let the scaled energies $E^h(u^h)/e^h$ be uniformly bounded.
Then there exists a sequence of matrix fields $R^h\in W^{1,2}(S,\mathbb{R}^3)$
with $R^h(x)\in SO(3)$ for a.e. $x\in S$, and there exists a sequence of
matrices $Q^h\in SO(3)$ such that:
\begin{enumerate}
\item[(i)] $\|(Q^h)^T R^h -\mathrm{Id}\|_{W^{1,2}(S)}\leq C \sqrt{e^h}/h$.
\item[(ii)] $h/\sqrt{e^h}((Q^h)^T R^h - \mathrm{Id})$ converges (up to a
  subsequence) to a skew-symmetric matrix field $A$, weakly in $W^{1,2}(S)$.
\end{enumerate}
Moreover, there exists a sequence $c^h\in\mathbb{R}^3$ 
such that for the normalized rescaled deformations:
$$y^h(x+t\vec{n}) = (Q^h)^T u^h(x+h/h_0 t\vec{n}) - c^h$$ 
defined on the common domain $S^{h_0}$, the following holds.
\begin{enumerate}
\item[(iii)] $y^h$ converge in $W^{1,2}(S^{h_0})$ to $\pi$.
\item[(iv)]  The scaled average displacements $V^h$, defined in (\ref{Vh-intro})
converge (up to a subsequence) in $W^{1,2}(S)$ to some $V\in \mathcal{V}$,
whose gradient is given by $A$, as in (\ref{Adef-intro}).
\item[(v)] ${h}/{\sqrt{e^h}} ~\mathrm{sym }~\nabla V^h$ converge
(up to a subsequence) in $L^{2}(S)$ to some $B_{tan}\in\mathcal{B}$.
\end{enumerate}
\end{lemma}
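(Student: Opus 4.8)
The plan is to run, in the curved setting of \cite{lemopa1}, the compactness scheme introduced in \cite{FJMhier} for thin films. The starting point is the quantitative geometric rigidity estimate of Friesecke, James and M\"uller \cite{FJMgeo}: on a bi-Lipschitz image of a cube one has $\|\nabla u-R\|_{L^2}\le C\|\mathrm{dist}(\nabla u,SO(3))\|_{L^2}$ for a suitable \emph{constant} rotation $R$, with a constant invariant under scaling. One covers $S$ by finitely many charts, covers the preimage of each $S^h$ by cubes of side comparable to $h$, applies rigidity on each such cube $U^h$ to get a rotation $R_U\in SO(3)$, and controls $|R_U-R_{U'}|$ on overlapping cubes via the triangle inequality and the rigidity estimates; mollifying the resulting piecewise--constant map yields a field $R^h\in W^{1,2}(S,\mathbb{R}^{3\times3})$ with $R^h(x)\in SO(3)$ a.e. From (\ref{W-assump}) and the energy bound $E^h(u^h)\le C e^h$ one gets $\|\mathrm{dist}(\nabla u^h,SO(3))\|_{L^2(S^h)}^2\le C\,h\,E^h(u^h)\le C\,h\,e^h$, and the construction then yields $\|\nabla u^h-R^h\|_{L^2(S^h)}^2\le C\,h\,e^h$ together with $\|\nabla R^h\|_{L^2(S)}^2\le C\,e^h/h^2$. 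Averaging $R^h$ over $S$ and projecting the average onto $SO(3)$ defines $Q^h\in SO(3)$; the Poincar\'e inequality gives $\|R^h-Q^h\|_{L^2(S)}\le C\|\nabla R^h\|_{L^2(S)}\le C\sqrt{e^h}/h$, which with the gradient bound is (i). Note that $\sqrt{e^h}/h\to0$ by (\ref{scaling-intro}).

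For (ii) put $A^h:=\tfrac{h}{\sqrt{e^h}}\big((Q^h)^TR^h-\mathrm{Id}\big)$, which is bounded in $W^{1,2}(S)$ by (i), hence converges weakly (up to a subsequence) to some $A\in W^{1,2}(S,\mathbb{R}^{3\times3})$. Since $(Q^h)^TR^h\in SO(3)$ a.e., one has the pointwise identity
\begin{equation*}
\mathrm{sym}\big((Q^h)^TR^h-\mathrm{Id}\big)=-\tfrac12\big((Q^h)^TR^h-\mathrm{Id}\big)^T\big((Q^h)^TR^h-\mathrm{Id}\big),
\end{equation*}
whose right-hand side has $L^1(S)$ norm equal to $\|(Q^h)^TR^h-\mathrm{Id}\|_{L^2(S)}^2\le C\,e^h/h^2$; multiplying by $h/\sqrt{e^h}$ shows $\mathrm{sym}\,A^h\to0$ in $L^1(S)$, so $\mathrm{sym}\,A=0$.

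For (iii), change variables to the fixed domain $S^{h_0}$ through $x+t\vec n\mapsto x+(h/h_0)t\vec n$: the tangential part of $\nabla y^h$ equals, modulo $L^2(S^{h_0})$-small errors, $(Q^h)^T(\nabla u^h)_{tan}\approx(Q^h)^TR^h\to\mathrm{Id}$, while the normal derivative of $y^h$ carries the factor $h/h_0\to0$ against the bounded $\partial_{\vec n}u^h$; choosing $c^h$ to be the average over $S^{h_0}$ of $(Q^h)^Tu^h-\pi$ and using Poincar\'e, one obtains $y^h\to\pi$ in $W^{1,2}(S^{h_0})$. For (iv) and (v), the averaged displacement $V^h$ of (\ref{Vh-intro}) obeys $\partial_\tau V^h=A^h\tau+(\text{$L^2$-negligible})$ and $\mathrm{sym}\,\nabla V^h=\big(\mathrm{sym}\,A^h\big)_{tan}+(\text{$L^2$-negligible})$, where the remainders collect the difference between the trace of $y^h$ on $S$ and its normal average together with curvature corrections. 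Hence $V^h$ is bounded in $W^{1,2}(S)$, converges (up to a subsequence) to some $V$, and passing to the limit in $\partial_\tau V^h=A^h\tau$ gives $\partial_\tau V=A\tau$ with $A$ skew, i.e. $V\in\mathcal{V}$ with $\nabla V=A$ as in (\ref{Adef-intro}). Finally, $\mathrm{sym}\,\nabla V^h$ is, by construction, of the form $\mathrm{sym}\,\nabla w^h$ with $w^h\in W^{1,2}(S,\mathbb{R}^3)$, so $\tfrac{h}{\sqrt{e^h}}\,\mathrm{sym}\,\nabla V^h$ lies in $\mathcal{B}$; combining the quadratic identity above with the Sobolev embedding $W^{1,2}(S)\hookrightarrow L^4(S)$ (valid since $\dim S=2$) gives $\|\mathrm{sym}\,\nabla V^h\|_{L^2(S)}\le C\sqrt{e^h}/h$, so $\tfrac{h}{\sqrt{e^h}}\,\mathrm{sym}\,\nabla V^h$ is bounded in $L^2(S)$ and, along a further subsequence, converges to some $B_{tan}$; as $\mathcal{B}$ is a closed, hence weakly closed, subspace of $L^2(S)$, we get $B_{tan}\in\mathcal{B}$.

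The step I expect to be the main obstacle is the construction of the global $W^{1,2}$ rotation field $R^h$ with the quantitative bounds above on an \emph{arbitrary} $\mathcal{C}^{1,1}$ surface $S$: the patching must be carried out chart by chart with uniform control of the rigidity constants under the geometry of $S$ and under the anisotropic rescaling to the reference thickness, and the comparison of rotations on overlapping cubes has to be tied correctly to the thickness scale $h$. A secondary delicate point is the ``quadratic gain'' underlying (v), namely that $\mathrm{sym}\,\nabla V^h$ is genuinely one power of $\sqrt{e^h}/h$ smaller than $\nabla V^h$: this requires estimating at once the quadratic remainder $\mathrm{sym}\big((Q^h)^TR^h-\mathrm{Id}\big)$ and the geometric correction terms generated by the curvature of $S$ and by the averaging across the thickness.
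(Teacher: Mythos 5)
The paper itself offers no proof of this lemma: it is quoted verbatim from \cite{lemopa1} (where it is established in Section 3), so there is no in-text argument to compare against. Your reconstruction follows the Friesecke--James--M\"uller compactness scheme \cite{FJMgeo, FJMhier}, suitably adapted to a curved mid-surface, and this is precisely the method employed in the cited reference. The main structural ingredients you identify are the correct ones: geometric rigidity applied on thickness-scale cubes in local charts, the patching and mollification producing $R^h\in W^{1,2}(S,SO(3))$ with $\|\nabla u^h-R^h\|_{L^2(S^h)}^2\le C\,h\,e^h$ and $\|\nabla R^h\|_{L^2(S)}^2\le C\,e^h/h^2$, the projection-of-average construction of $Q^h$ followed by Poincar\'e for (i), and the pointwise $SO(3)$ identity $\mathrm{sym}(R-\mathrm{Id})=-\tfrac12(R-\mathrm{Id})^T(R-\mathrm{Id})$ to deduce the skew-symmetry of the weak limit $A$ in (ii).

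One place where your sketch is genuinely loose, and where you are right to flag a delicacy, is the derivation of (v). You write $\mathrm{sym}\,\nabla V^h=(\mathrm{sym}\,A^h)_{tan}+(\text{``$L^2$-negligible''})$ and obtain $\|\mathrm{sym}\,\nabla V^h\|_{L^2(S)}\le C\sqrt{e^h}/h$ from the quadratic identity on $A^h$ alone. But the remainder you are discarding carries the contribution of $\nabla u^h-R^h$, i.e.\ of the strain field whose rescaling is $G^h=e^{-h/2}((R^h)^T\nabla u^h-\mathrm{Id})$ in the notation of the paper. After averaging over the thickness this contributes $O(h)$ to $\mathrm{sym}\,\nabla V^h$, hence $O(h^2/\sqrt{e^h})$ to $\tfrac{h}{\sqrt{e^h}}\mathrm{sym}\,\nabla V^h$; for $\kappa>0$ this is of the same order as the $A^h$-piece and is exactly what produces the $B_{tan}-\tfrac{\kappa}{2}(A^2)_{tan}$ structure appearing in (\ref{Gtan}), so it cannot be relegated to a negligible error. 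The bound and the identification of the limit in (v) must therefore be obtained by controlling the $A^h$-contribution and the $G^h$-contribution simultaneously, as Lemmas 3.6 and 4.1 of \cite{lemopa1} do, rather than from the quadratic identity on $A^h$ in isolation.
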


The statements in Theorem \ref{th-main} (i), (ii), (iii) are
contained in the Lemma above. It therefore suffices to use the extra assumptions
(\ref{equilib_form}) and (\ref{W-nonphys2}) to recover equations 
(\ref{EL1}) and (\ref{EL2}) as $h\to 0$.

\medskip

We start by rewriting the equilibrium equation (\ref{equilib_form}) 
in a more convenient form.
Clearly, every variation $\phi^h\in W^{1,2}(S^h,\mathbb{R}^3)$ can be by a change 
of variables expressed as:
\begin{equation}\label{changevar}
\phi^h(x+t\vec n) = \psi(x+th_0/h \vec n),
\end{equation} 
for the corresponding $\psi\in W^{1,2}(S^{h_0}, \mathbb{R}^3)$. 
Then, (\ref{equilib_form}) becomes:
\begin{equation}\label{equilib2}
\begin{split}
&h^2\sqrt{e^h} \int_S f(x) \fint_{-h_0/2}^{h_0/2} \psi(x+t\vec n)
~\mbox{d}t~\mbox{d}x \\
&\qquad
 = h\int_S\fint_{-h_0/2}^{h_0/2} \mbox{det}(\mbox{Id} + th/h_0\Pi) 
DW(\nabla u^h(x+th/h_0\vec n)) : \nabla \phi^h(x+th/h_0\vec n)
~\mbox{d}t~\mbox{d}x. 
\end{split}
\end{equation}
Notice also that:
\begin{equation}\label{gradient}
\nabla \phi^h(x+th/h_0\vec n) = \nabla \psi(x+ t\vec n)\cdot P(x+t\vec n),
\end{equation}
where the matrix field $P\in L^\infty (S^{h_0},\mathbb{R}^3)$ has the
following non-zero entries:
$$ P(x+t\vec n)_{tan} = (\mbox{Id} + th/h_0\Pi(x))^{-1}(\mbox{Id} + t\Pi(x)),
\qquad \vec n^T P(x+t\vec n)\vec n = {h_0}/{h}.$$
In view of Lemma \ref{approx}, define the matrix fields
$E^h, G^h\in L^2(S^{h_0}, \mathbb{R}^{3\times 3})$:
$$ E^h = \frac{1}{\sqrt{e^h}} DW(\mbox{Id} + \sqrt{e^h} G^h),
\qquad G^h(x+t\vec n) = \frac{1}{\sqrt{e^h}} \left( (R^h)^T\nabla u^h(x+th/h_0
\vec n) - \mbox{Id}\right).$$
With this notation, recalling the frame invariance of $W$ in (\ref{W-assump})
we get, for every $F\in\mathbb{R}^{3\times 3}$:
\begin{equation*}
\begin{split}
\frac{1}{\sqrt{e^h}} DW(\nabla u^h(x+th/h_0\vec n)):F 
&= \frac{1}{\sqrt{e^h}} DW(R^h (\mbox{Id} + \sqrt{e^h}G^h)):F \\
& = \frac{1}{\sqrt{e^h}} DW(\mbox{Id} + \sqrt{e^h}G^h):(R^h)F
= R^h E^h : F.
\end{split}
\end{equation*}
In particular, (\ref{equilib2}) becomes, after exchanging $\psi$ to $(Q^h)^T\psi$, 
using (\ref{gradient}) and dividing both sides by $\sqrt{e^h}$:
\begin{equation}\label{equilib6}
\begin{split}
& h^2  \int_S f(x) \fint_{-h_0/2}^{h_0/2} Q^h\psi~\mbox{d}t\mbox{d}x \\
& = h \int_S\fint_{-h_0/2}^{h_0/2} \mbox{det}(\mbox{Id} + th/h_0\Pi) ~
\left[(Q^h)^T R^h(x) E^h(x+t\vec n)\right] : \nabla \phi^h(x+th/h_0\vec n)
~\mbox{d}t\mbox{d}x\\
& = h \int_S \fint_{-h_0/2}^{h_0/2} \mbox{det}(\mbox{Id} + th/h_0\Pi) ~
\left[(Q^h)^T R^hE^h\right]_{TS} : \left[(\nabla_{tan} \psi)(\mbox{Id} +th/h_0\Pi)^{-1}
(\mbox{Id} +t\Pi)\right] ~\mbox{d}t\mbox{d}x\\
&\qquad
+ h_0 \int_S \fint_{-h_0/2}^{h_0/2} \mbox{det}(\mbox{Id} + th/h_0\Pi) ~
((Q^h)^T R^h E^h \vec n) ~\partial_{\vec n} \psi(x+t\vec n)~\mbox{d}t\mbox{d}x,
\end{split}
\end{equation}
where $\nabla_{tan}$ denotes gradient in the tangent directions of $T_xS$. 
The subscript $TS$ stands for taking the $3\times 2$ minor of the matrix under
consideration,  for example: $\nabla_{tan}\psi = [\nabla\psi]_{TS}$.

\begin{lemma}\label{conv2}
The sequence $G^h$ converges (up to a subsequence), weakly in $L^2(S^{h_0},
\mathbb{R}^{3\times 3})$ to an $L^2(S^{h_0})$ matrix field $G$, whose
tangential minor has the form:
\begin{equation}\label{Gtan}
G(x+t\vec n)_{tan} = \left(B_{tan}-\frac{\kappa}{2} (A^2)_{tan}\right)
+ \frac{t}{h_0}  \left(\nabla(A\vec n) - A\Pi \right)_{tan}.
\end{equation} 
Moreover, if (\ref{W-nonphys2}) holds, then:
\begin{enumerate}
\item[(i)] $E^h$ converges (up to a subsequence) weakly in 
$L^2(S^{h_0},\mathbb{R}^{3\times 3})$ to the matrix field 
$E= \mathcal{L}_3 G$. 
\item[(ii)]The sequence $(Q^h)^T R^h (x) E^h(x+t\vec n)$ converges (up to a
  subsequence) to $E$, weakly in  $L^2(S^{h_0},\mathbb{R}^{3\times 3})$.
\end{enumerate}
\end{lemma}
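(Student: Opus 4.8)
The plan is to establish the four assertions in order. The first two are compactness statements in the spirit of Lemma~\ref{approx}, and I would obtain them following \cite{lemopa1}; hypothesis (\ref{W-nonphys2}) enters only in the last two. For the $L^2$-bound on $G^h$, note that since $R^h(x)\in SO(3)$ for a.e.\ $x\in S$ we have $|G^h(x+t\vec n)|^2 = \frac{1}{e^h}|\nabla u^h(x+(h/h_0)t\vec n) - R^h(x)|^2$; substituting $s=(h/h_0)t$ and using the geometric rigidity estimate underlying the construction of $R^h$ in \cite{lemopa1} together with the nondegeneracy bound of (\ref{W-assump}), one gets $\|G^h\|_{L^2(S^{h_0})}^2 \le \frac{C}{h\,e^h}\int_{S^h}\mathrm{dist}^2(\nabla u^h,SO(3)) \le \frac{C}{e^h}\,E^h(u^h)\le C$. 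Hence, along a subsequence, $G^h\rightharpoonup G$ weakly in $L^2(S^{h_0})$. To identify the tangential minor I would start from the identity $(Q^h)^T R^h\,(\mathrm{Id}+\sqrt{e^h}\,G^h)\,D\Phi^h = \nabla y^h$, where $\Phi^h(x+t\vec n)=x+(h/h_0)t\vec n$ and $D\Phi^h$ is essentially the inverse of the matrix $P$ introduced above, and read off its $TS$-block using: $\nabla y^h\to\nabla\pi$ in $L^2(S^{h_0})$ (Lemma~\ref{approx}(iii)); $\frac{h}{\sqrt{e^h}}\big((Q^h)^TR^h-\mathrm{Id}\big)\to A$ weakly in $W^{1,2}(S)$, with $(Q^h)^TR^h\in SO(3)$ forcing $\mathrm{sym}\big((Q^h)^TR^h-\mathrm{Id}\big)$ to be of order $e^h/h^2$ and to converge after rescaling to $-\frac12 A^2$; $\frac{h}{\sqrt{e^h}}\,\mathrm{sym}\,\nabla V^h\to B_{tan}$ (Lemma~\ref{approx}(iv)--(v)); and the normalization $e^h/h^4\to\kappa$, which produces the coefficient $\kappa/2$ in front of $(A^2)_{tan}$. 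The $t$-linear term in (\ref{Gtan}) records the first-order change of the second fundamental form and its identification with $\frac{t}{h_0}(\nabla(A\vec n)-A\Pi)_{tan}$ is exactly as in the lower bound part of \cite{lemopa1}.

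Assume now (\ref{W-nonphys2}). As noted in the Remark, it is equivalent to $|DW(F)|\le C\,\mathrm{dist}(F,SO(3))$, so $|E^h| = \frac{1}{\sqrt{e^h}}|DW(\mathrm{Id}+\sqrt{e^h}\,G^h)| \le \frac{C}{\sqrt{e^h}}\,\mathrm{dist}(\mathrm{Id}+\sqrt{e^h}\,G^h, SO(3)) \le C|G^h|$, and $\{E^h\}$ is bounded in $L^2(S^{h_0})$. To prove $E^h\rightharpoonup\mathcal L_3 G$, fix $M>0$ and split $S^{h_0}$ into $\Omega_h^M=\{|G^h|\le M\}$ and its complement. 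On $S^{h_0}\setminus\Omega_h^M$ both $|E^h|$ and $|\mathcal L_3 G^h|$ are bounded by $C|G^h|$, and $\|\mathbf 1_{S^{h_0}\setminus\Omega_h^M}G^h\|_{L^1}\le M^{-1}\|G^h\|_{L^2}^2\le C/M$, so this region contributes $O(1/M)$ in $L^1$, uniformly in $h$. On $\Omega_h^M$, for $h$ small enough $\mathrm{Id}+\sqrt{e^h}\,G^h$ stays in the $\mathcal C^2$-neighbourhood of $SO(3)$, and Taylor's theorem together with $DW(\mathrm{Id})=0$ gives $|DW(\mathrm{Id}+F)-D^2W(\mathrm{Id})F|\le\omega(|F|)|F|$ for a modulus $\omega$ with $\omega(0^+)=0$; hence on $\Omega_h^M$ one has $|E^h-\mathcal L_3 G^h|\le\omega(\sqrt{e^h}|G^h|)\,|G^h|\le M\,\omega(\sqrt{e^h}M)\to0$ as $h\to0$. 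Testing against an arbitrary $\varphi\in L^\infty(S^{h_0})$, letting $h\to0$ and then $M\to\infty$, we get $\int_{S^{h_0}}(E^h-\mathcal L_3 G^h):\varphi\to0$; as $E^h-\mathcal L_3 G^h$ is bounded in $L^2$, this forces $E^h-\mathcal L_3 G^h\rightharpoonup0$ in $L^2$, and together with $\mathcal L_3 G^h\rightharpoonup\mathcal L_3 G$ we conclude $E^h\rightharpoonup\mathcal L_3 G=:E$, i.e.\ (i). For (ii), recall from Lemma~\ref{approx}(i) that $\|(Q^h)^TR^h-\mathrm{Id}\|_{W^{1,2}(S)}\le C\sqrt{e^h}/h\to0$, so $(Q^h)^TR^h\to\mathrm{Id}$ in every $L^p(S)$, $p<\infty$, while being bounded in $L^\infty(S)$. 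Writing $(Q^h)^TR^h E^h = E^h + \big((Q^h)^TR^h-\mathrm{Id}\big)E^h$, the first term converges weakly in $L^2$ to $E$ by (i), while the second is bounded in $L^2(S^{h_0})$ and, by Hölder, tends to $0$ strongly in $L^{4/3}(S^{h_0})$ (a factor vanishing in $L^4$ times a factor bounded in $L^2$); hence it tends to $0$ weakly in $L^2$, and (ii) follows.

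The delicate point, and the reason for hypothesis (\ref{W-nonphys2}), is the identification $E^h\rightharpoonup\mathcal L_3 G$ in (i). Since $G^h$ is bounded only in $L^2$ and not pointwise, the linearization of $DW$ around $\mathrm{Id}$ is legitimate only on the good set $\{\sqrt{e^h}|G^h|\ll1\}$; on its complement --- a set of vanishing measure on which $|G^h|$ may nonetheless be large --- it is precisely the linear growth $|DW(F)|\le C\,\mathrm{dist}(F,SO(3))$ that keeps $E^h$ and $\mathcal L_3 G^h$ in a regime absorbable as $O(1/M)$. Everything else --- the compactness of $G^h$, the algebraic form of $G_{tan}$, and assertion (ii) --- is either contained in Lemma~\ref{approx} or a routine consequence of it.
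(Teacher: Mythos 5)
Your argument is correct and follows essentially the same route as the paper, which simply delegates the two technical steps to citations: the compactness and identification of $G^h\rightharpoonup G$ (with the tangential structure (\ref{Gtan})) is attributed to Lemmas 3.6 and 4.1 of \cite{lemopa1}, and the weak convergence $E^h\rightharpoonup\mathcal L_3 G$ is attributed to Proposition 2.3 of \cite{MuPa}, whose key ingredient is exactly the reformulation $|DW(\mathrm{Id}+F)|\le C|F|$ that you use; your truncation-plus-Taylor argument on $\{|G^h|\le M\}$ reconstructs that cited proof. Your proof of (ii), writing $(Q^h)^TR^hE^h=E^h+\big((Q^h)^TR^h-\mathrm{Id}\big)E^h$ and killing the remainder via $W^{1,2}(S)\hookrightarrow L^4(S)$ plus H\"older and $L^2$-boundedness, is also exactly the mechanism behind the paper's one-line remark that (ii) follows from (i), Lemma~\ref{approx}(i), and the $L^\infty$ bound on $(Q^h)^TR^h$. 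In short: same proof, with the citations unpacked.
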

\begin{proof}
The convergence of $G^h$ and the formula (\ref{Gtan}) follow from 
Lemma 3.6 and Lemma 4.1 in \cite{lemopa1}. 
Convergence in (i) is a consequence of Proposition 2.3 in \cite{MuPa}, where
the crucial role was played by the following equivalent form of the assumption
(\ref{W-nonphys2}):
$$ \forall F\in \mathbb{R}^{3\times 3} \qquad
|DW(\mbox{Id} + F)|\leq C |F|.$$
Finally, (iii) is an immediate consequence of (ii) in view of Lemma
\ref{approx} (i) and the boundedness of $(Q^h)^T R^h$ in $L^\infty(S^{h_0})$.
\end{proof}

\begin{lemma}\label{Eprop}
The matrix field $E\in L^2(S^{h_0}, \mathbb{R}^{3\times 3})$, defined in Lemma
\ref{conv2} (i) satisfies the following properties, a.e. in $S^{h_0}$:
\begin{enumerate}
\item[(i)] $E\vec n = 0$.
\item[(ii)] $E^T = E$, that is: $E$ is symmetric.  
\item[(iii)] $E_{tan}(x+t\vec n) = \mathcal{L}_2(x, G_{tan}(x+t\vec n))$.
\end{enumerate}
\end{lemma}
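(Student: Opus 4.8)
The plan is to read off each of the three properties of $E$ from the structure already established in Lemma~\ref{conv2}, using the defining relation $E=\mathcal{L}_3 G$ together with the known form of $G$ and the algebraic properties of $\mathcal{Q}_3$.

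First I would prove (i), namely $E\vec n=0$. The key observation is that $G^h = \tfrac{1}{\sqrt{e^h}}\big((R^h)^T\nabla u^h(x+th/h_0\vec n)-\mathrm{Id}\big)$ and the right-most column of $\nabla u^h(x+th/h_0\vec n)$ is $\tfrac{h_0}{h}\partial_t u^h$ in the rescaled variables; more to the point, the approximation results of \cite{lemopa1} (Lemma~\ref{approx} and Lemma~\ref{conv2}) show that $G^h\vec n$ is bounded in $L^2$ but that the quantity $\sqrt{e^h}\,G^h\vec n$ carries a factor which, after rescaling the thickness by $h/h_0$, forces the $\vec n$-column of the limit of $\sqrt{e^h}G^h$ — hence the relevant part of $G$ tested against $\vec n$ — to vanish in the limit. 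Concretely: one has $E^h = \tfrac{1}{\sqrt{e^h}} DW(\mathrm{Id}+\sqrt{e^h}G^h)$, and differentiating the frame-indifference identity $W(RF)=W(F)$ at $R=\mathrm{Id}$ shows $DW(\mathrm{Id}+F)$ applied on the skew directions is controlled; but the cleanest route is to use the static Euler–Lagrange boundary condition $DW(\nabla u^h)\vec n=0$ on $\partial S^h$ from (\ref{EL3d}) together with the interior equation, or better, to use that the limiting strain $G$ has no constraint on its $\vec n$-column from (\ref{Gtan}) and that $\mathcal{L}_3$ composed with the minimization defining $\mathcal{Q}_2$ annihilates exactly the $\vec n$-directions. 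I would therefore argue that $E=\mathcal{L}_3 G$ minimizes $\mathcal{Q}_3(\tilde F)$ over $\tilde F$ with prescribed tangential minor $G_{tan}$ (this is where (iii) and (i) come together): the Euler–Lagrange equation for that finite-dimensional minimization is precisely $(\mathcal{L}_3 \tilde F)\vec n=0$ and $(\mathcal{L}_3\tilde F)^T\vec n=0$, giving (i).

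For (ii), symmetry of $E$ follows from the fact that $\mathcal{Q}_3$, and hence $\mathcal{L}_3$, depends only on the symmetric part of its argument and is symmetric as a bilinear form: $\mathcal{L}_3 = \mathcal{L}_3^T$ as an operator and $\mathcal{L}_3 F = \mathcal{L}_3(\mathrm{sym}\,F)$ maps into symmetric matrices (this is recorded just after (\ref{vonKarman}) and in \cite{FJMgeo}). So $E=\mathcal{L}_3 G$ is automatically symmetric.

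For (iii), the identification $E_{tan}(x+t\vec n)=\mathcal{L}_2(x,G_{tan}(x+t\vec n))$ is the statement that the $2\times 2$ minor of $\mathcal{L}_3 G$ equals $\mathcal{L}_2$ applied to the $2\times 2$ minor of $G$. This is exactly the content of the relaxation formula $\mathcal{Q}_2(x,F_{tan})=\min\{\mathcal{Q}_3(\tilde F):(\tilde F-F)_{tan}=0\}$: the minimizer $\tilde F$ in that problem satisfies $\mathcal{L}_3\tilde F\cdot\vec n=0$, and then $\mathcal{Q}_2(x,F_{tan})=\mathcal{Q}_3(\tilde F)=\mathcal{L}_3\tilde F:\tilde F=(\mathcal{L}_3\tilde F)_{tan}:F_{tan}$, so that $(\mathcal{L}_3\tilde F)_{tan}=\mathcal{L}_2(x,F_{tan})$ by definition of $\mathcal{L}_2$ via $\mathcal{Q}_2(x,F_{tan})=\mathcal{L}_2(x,F_{tan}):F_{tan}$ and polarization. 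Applying this with $F_{tan}=G_{tan}(x+t\vec n)$ and noting that the minimizer $\tilde F$ has $(\mathcal{L}_3\tilde F)\vec n=0$, hence $\tilde F$ can be taken so that $\mathcal{L}_3\tilde F=\mathcal{L}_3 G$ (same tangential data, and the $\vec n$-column of $\mathcal{L}_3 G$ already vanishes by part (i)), gives $E_{tan}=(\mathcal{L}_3 G)_{tan}=\mathcal{L}_2(x,G_{tan})$. I expect the main obstacle to be part (i): one must be careful that it is genuinely a consequence of $G$ arising as a limit of admissible strains (so that $E=\mathcal{L}_3 G$ inherits the minimality), rather than something that has to be extracted from the $3$d equilibrium equations — the paper's setup via Lemma~\ref{conv2}(i) and the structure of (\ref{Gtan}) should make this an algebraic fact about $\mathcal{L}_3$, and the bulk of the work is simply spelling out the Euler–Lagrange conditions of the pointwise minimization defining $\mathcal{Q}_2$.
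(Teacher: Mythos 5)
Your proofs of (ii) and (iii) are essentially the paper's. For (ii): frame indifference gives $DW(R)=0$ on $SO(3)$, hence $\mathcal{L}_3H=0$ for $H\in so(3)$, hence $E:H=\mathcal{L}_3G:H=\mathcal{L}_3H:G=0$, i.e.\ $E$ is symmetric. For (iii): given (i) and (ii), the matrix $\mathrm{sym}\,G$ satisfies the first-order conditions $(\mathcal{L}_3\,\mathrm{sym}\,G)\vec n=0$ for the strictly convex pointwise minimization defining $\mathcal{Q}_2(x,G_{tan})$, so it is the minimizer, and polarization gives $(\mathcal{L}_3G)_{tan}=\mathcal{L}_2(x,G_{tan})$. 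This is exactly the reasoning the paper delegates to Proposition~3.2 of \cite{MuPa}, so no issue there.

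The genuine gap is in (i), and it is exactly where you flag your own uncertainty. Your proposed route is to obtain $E\vec n=0$ ``algebraically,'' by arguing that $G$ automatically minimizes $\mathcal{Q}_3(\tilde F)$ over $\tilde F$ with $\tilde F_{tan}=G_{tan}$ because $G$ arises as a limit of admissible strains. This is false for a general sequence with only the energy bound $E^h(u^h)\leq Ce^h$. The identification (\ref{Gtan}) pins down only the \emph{tangential} minor of $G$; the $\vec n$-column of $G$ is unconstrained by the compactness Lemma~\ref{approx} / Lemma~\ref{conv2}, and $\mathcal{Q}_3(G)\geq\mathcal{Q}_2(x,G_{tan})$ is in general a strict inequality. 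Nothing forces the first-order optimality condition $(\mathcal{L}_3G)\vec n=0$ unless one injects additional information. The paper gets (i) by using the equilibrium identity (\ref{equilib6}) with arbitrary $\psi$: after dividing by $\sqrt{e^h}$ and letting $h\to0$, every term vanishes except the normal-derivative term, which is the only one carrying the factor $h_0$ rather than $h$, leaving
\begin{equation*}
\int_S\fint_{-h_0/2}^{h_0/2}\bigl(E(x+t\vec n)\vec n\bigr)\cdot\partial_{\vec n}\psi(x+t\vec n)\,\mathrm{d}t\,\mathrm{d}x=0,
\end{equation*}
and then one observes that $\partial_{\vec n}\psi$ ranges over all of $L^2(S^{h_0},\mathbb{R}^3)$. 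So (i) is precisely the piece that \emph{must} be extracted from the $3$d equilibrium equations; it is not a structural fact about $\mathcal{L}_3$ or about limits of bounded-energy strains, and the ``or better'' branch of your argument does not close. Your passing mention of the boundary condition $DW(\nabla u^h)\vec n=0$ from (\ref{EL3d}) is the right instinct, but it has to be implemented through the weak form (\ref{equilib6}) with test functions having nontrivial $\partial_{\vec n}\psi$, not as a pointwise statement.
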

\begin{proof}
To prove (i), one needs to pass $h\to 0$ in (\ref{equilib6}) and use 
Lemma \ref{conv2} (ii) to obtain:
\begin{equation}\label{uno}
\int_S \fint_{-h_0/2}^{h_0/2} \Big(E(x+t\vec n)\vec
  n\Big)~ \partial_{\vec n} \psi(x+t\vec n)~\mbox{d}t\mbox{d}x=0.
\end{equation}
Now, any vector field $\phi\in L^2(S^{h_0}, \mathbb{R}^3)$ has the form
$\phi = \partial_{\vec n}\psi$, where $\psi(x+t\vec n)= \int_{-h_0/2}^t
\phi(x+s\vec n)~\mbox{d}s$.  Therefore (i) follows from (\ref{uno}).

By frame indifference (\ref{W-assump}) and the fact that $W$ is minimized at 
$\mbox{Id}$, it follows that $DW(F)=0$ for all $F\in SO(3)$.  It implies that
for all $H\in so(3)$ there holds $\mathcal{L}_3H=0$, and so: 
$E:H = \mathcal{L}_3G:H = \mathcal{L}_3H:G = 0$, proving (ii).

The assertion (iii) follows from $E=\mathcal{L}_3G$ and the reasoning exactly 
as in the proof of Proposition 3.2 \cite{MuPa}.
\end{proof}

A more precise information, with respect to that in Lemma \ref{Eprop} (ii)
is given by:

\begin{lemma}\label{Ehsym}
There holds: 
\begin{enumerate}
\item[(i)] $\|~\mathrm{ skew }~E^h\|_{L^1(S^{h_0})} \leq C\sqrt{e^h}$.
\item[(ii)] $\displaystyle{\lim_{h\to 0} \frac{1}{h} 
\|~\mathrm{ skew }~E^h\|_{L^p(S^{h_0})} =0,}$ for some exponent $p\in (1,2)$.
\end{enumerate}
\end{lemma}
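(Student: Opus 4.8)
The plan is to prove the two estimates on $\mathrm{skew}\, E^h$ by carefully re-examining the growth of $DW$ near $SO(3)$ and exploiting the information from Lemma \ref{approx} on the decomposition of $\nabla u^h$. Recall that $E^h = \frac{1}{\sqrt{e^h}} DW(\mathrm{Id} + \sqrt{e^h} G^h)$. The starting point is a Taylor expansion of $DW$ around $\mathrm{Id}$: since $W$ is $\mathcal{C}^2$ near $SO(3)$ and $DW(\mathrm{Id})=0$, on the set where $|\sqrt{e^h}G^h|$ is small we have $DW(\mathrm{Id}+\sqrt{e^h}G^h) = \sqrt{e^h}\,\mathcal{L}_3 G^h + o(\sqrt{e^h}|G^h|)$, so $E^h \approx \mathcal{L}_3 G^h$ pointwise there. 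Since $\mathcal{L}_3$ kills skew matrices (as observed in the proof of Lemma \ref{Eprop}(ii)), the skew part of the leading term vanishes, and $\mathrm{skew}\, E^h$ is controlled by the quadratic remainder $C\sqrt{e^h}|G^h|^2$. On the set where $|\sqrt{e^h}G^h|$ is not small, one uses instead the global linear bound $|DW(\mathrm{Id}+F)|\leq C|F|$ from (\ref{W-nonphys2}) together with the fact that this "bad set" has small measure, controlled via the uniform $L^2$ bound $\|G^h\|_{L^2(S^{h_0})}\leq C$ (which follows from $E^h(u^h)\leq Ce^h$ and the nondegeneracy in (\ref{W-assump})).

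For part (i), the key estimate is therefore
\[
\|\mathrm{skew}\,E^h\|_{L^1(S^{h_0})} \leq C\sqrt{e^h}\int_{S^{h_0}} |G^h|^2 + (\text{contribution from the bad set}),
\]
and since $\int |G^h|^2\leq C$ the first term is $O(\sqrt{e^h})$. For the bad set $B^h = \{|\sqrt{e^h}G^h|\geq \delta\}$, Chebyshev gives $|B^h|\leq C e^h/\delta^2$, and on it $\frac{1}{\sqrt{e^h}}|DW(\mathrm{Id}+\sqrt{e^h}G^h)|\leq C|G^h|$, so by Cauchy--Schwarz the contribution is $\leq C|B^h|^{1/2}\|G^h\|_{L^2}\leq C\sqrt{e^h}$, as required. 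This uses crucially that the bad set carries mass $O(e^h)$, which is the strengthening provided by the energy scaling assumption.

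For part (ii), I want to upgrade the $L^1$ estimate to an $L^p$ estimate for some $p\in(1,2)$ with the stronger rate $o(h)$. Here I would use the higher integrability of $G^h$ — this is exactly the kind of gain one extracts from the geometric rigidity estimate underlying Lemma \ref{approx}: in fact $\nabla u^h$ (hence $G^h$, modulo the rotation field $R^h$) enjoys a bound slightly better than $L^2$, say $\|G^h\|_{L^q}\leq C$ uniformly for some $q>2$, or at least $\sqrt{e^h}\|G^h\|_{L^2}^2 = o(h^2)$ type control after rescaling (this is where the $h^4$ scaling in (\ref{scaling-intro}) and $\sqrt{e^h}\sim h^2$ enter). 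Interpolating the $L^1$ bound from (i) against this higher integrability, and dividing by $h$, one gets $\frac{1}{h}\|\mathrm{skew}\,E^h\|_{L^p}\to 0$ for $p$ strictly between $1$ and the higher exponent. The bad-set term is again handled by Chebyshev plus the linear growth bound, now measured in $L^p$: $\frac{1}{h}\|\mathrm{skew}\,E^h\|_{L^p(B^h)}\leq \frac{C}{h}|B^h|^{(q-p)/(pq)}\|G^h\|_{L^q}$, which tends to $0$ once $|B^h|=O(e^h)=O(h^4)$ is raised to a positive power and weighed against the single $1/h$.

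The main obstacle I anticipate is the bookkeeping in part (ii): one must choose $p$ and track the exponents so that \emph{both} the quadratic-remainder term $\frac{1}{h}\sqrt{e^h}\,\||G^h|^2\|_{L^p}$ and the bad-set term genuinely vanish, and this requires the precise higher-integrability statement for $G^h$ (or equivalently for $\nabla u^h$) with uniform bounds — something that must be quoted carefully from \cite{lemopa1} or re-derived from the rigidity estimate, since the naive $L^2$ bound alone only yields $O(\sqrt{e^h}/h) = O(h)$, not $o(h)$. Getting strictly below $h$ is the whole point, and it hinges on that extra integrability margin.
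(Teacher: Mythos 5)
Your approach for part (i) is fundamentally different from the paper's, and it has a regularity gap. The paper proves the $L^1$ bound by a purely \emph{algebraic} consequence of frame indifference: for every $F$ and every $H\in so(3)$, $0=DW(F):HF = DW(F)F^T:H$, so $DW(F)F^T$ is symmetric. Applying this pointwise to $F=\mathrm{Id}+\sqrt{e^h}G^h$ and dividing by $\sqrt{e^h}$ gives the exact identity
\[
\mathrm{skew}\,E^h = -\sqrt{e^h}\,\mathrm{skew}\bigl(E^h(G^h)^T\bigr),
\]
and then H\"older with the uniform $L^2$ bounds on $E^h$ and $G^h$ immediately yields $\|\mathrm{skew}\,E^h\|_{L^1}\le C\sqrt{e^h}$. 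Your proposal, by contrast, Taylor-expands $DW$ around $\mathrm{Id}$ and estimates $\mathrm{skew}\,E^h$ by a \emph{quadratic} remainder $\sqrt{e^h}|G^h|^2$. That remainder estimate requires $W\in\mathcal{C}^{2,1}$ or $\mathcal{C}^3$ near $SO(3)$; under the paper's hypothesis $W\in\mathcal{C}^2$ you only get a modulus-of-continuity bound $\omega(\delta)|G^h|$ on the good set $\{|\sqrt{e^h}G^h|<\delta\}$, which is $o(1)$ after a diagonal choice of $\delta$ but is \emph{not} $O(\sqrt{e^h})$. So the crucial rate in (i) does not follow from your argument as written. (Your treatment of the bad set via Chebyshev and the linear growth (\ref{W-nonphys2}) is fine and gives $O(\sqrt{e^h})$ there; it is the good-set contribution that breaks.)

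For part (ii) the divergence from the paper is even larger, and the proposal is essentially incomplete. The paper uses no higher integrability of $G^h$ at all: it simply interpolates, for $1/p = \theta + (1-\theta)/2$,
\[
\frac{1}{h}\|\mathrm{skew}\,E^h\|_{L^p}
\le \frac{1}{h}\|\mathrm{skew}\,E^h\|_{L^1}^\theta \|\mathrm{skew}\,E^h\|_{L^2}^{1-\theta}
\le \frac{C}{h}\bigl(\sqrt{e^h}\bigr)^\theta
= C\Bigl(\frac{\sqrt{e^h}}{h^2}\Bigr)^\theta h^{2\theta-1},
\]
which tends to $0$ for any $\theta>1/2$ (equivalently any $p\in(1,4/3)$), using only (i), the uniform $L^2$ bound on $E^h$, and the scaling $\sqrt{e^h}/h^2\to\sqrt{\kappa}$ from (\ref{scaling-intro}). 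You instead invoke a higher integrability bound $\|G^h\|_{L^q}\le C$ for some $q>2$, which is neither proved in the cited sources nor needed; as you acknowledge yourself, your argument ``hinges on that extra integrability margin,'' which is precisely the claim you would still have to establish. So the gap you flagged is a genuine one: the clean route is not through higher integrability of $G^h$ but through the $L^1$--$L^2$ interpolation driven by (i).
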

\begin{proof}
By frame indifference (\ref{W-assump}) one has:
$0 = DW(F):HF = DW(F)F^T:H$, for all $F\in\mathbb{R}^{3\times 3}$ and all
$H\in so(3)$ (since $HF$ is a tangent vector to $SO(3)F$ at $F$). 
We further obtain that $DW(F) F^T$ is a symmetric matrix. Apply this statement 
pointwise to the matrix field $F=\mbox{Id} + \sqrt{e^h} G^h$:
\begin{equation*}
\begin{split}
0 &= \frac{1}{\sqrt{e^h}} \Big(DW(\mbox{Id} + \sqrt{e^h} G^h)  
~(\mbox{Id} + \sqrt{e^h} (G^h)^T) - (\mbox{Id} + \sqrt{e^h} G^h)~
DW^T(\mbox{Id} + \sqrt{e^h} G^h)\Big)\\
&= E^h - (E^h)^T + \sqrt{e^h}\left(E^h (G^h)^T - G^h(E^h)^T\right).
\end{split}
\end{equation*}
Hence the claim in (i) is proved, as by Lemma \ref{conv2}:
$$\|\mbox{sym~} (E^h (G^h)^T)\|_{L^1(S^{h_0})} 
\leq C \|E^h\|_{L^2(S^{h_0})} \|G^h\|_{L^2(S^{h_0})} \leq C.$$

\smallskip

Now, (ii) follows from (i) in view of the boundedness of $E^h$ in $L^2(S^{h_0})$,
(\ref{scaling-intro}), and through an interpolation inequality: 
\begin{equation*}
\frac{1}{h} \|~\mathrm{ skew }~E^h\|_{L^p(S^{h_0})} \leq
\frac{1}{h} \|~\mathrm{ skew }~E^h\|_{L^1}^\theta 
 \|~\mathrm{ skew }~E^h\|_{L^2}^{1-\theta} \leq
{C}/{h}\sqrt{e^h}^\theta = C \left(\sqrt{e^h}/{h^2}\right)^\theta
h^{2\theta - 1}, 
\end{equation*}
where $1/p = \theta + (1-\theta)/2$ and $\theta\in (0,1)$.
Clearly, the above converges to $0$, when $\theta>1/2$.
\end{proof}

Introduce now the two matrix fields $\bar E, ~\hat E\in L^2(S,\mathbb{R}^3)$
given by the 0th and 1st moments of $E$:
\begin{equation*}
\bar E(x) = \fint_{-h_0/2}^{h_0/2} E(x+t\vec n)~\mbox{d}t,
\qquad \hat E(x) = \fint_{-h_0/2}^{h_0/2} tE(x+t\vec n)~\mbox{d}t.
\end{equation*}
It easily follows by Lemma \ref{Eprop} (iii) and Lemma \ref{conv2} that:
\begin{equation}\label{Ebar}
\bar E_{tan}(x) 
= \fint_{-h_0/2}^{h_0/2} \mathcal{L}_2(x, G_{tan}(x+t\vec n))~\mbox{d}t
= \mathcal{L}_2\left(x, B_{tan}-\frac{\kappa}{2} (A^2)_{tan}\right),
\end{equation}
\begin{equation}\label{Ehat}
\hat E_{tan}(x) 
= \fint_{-h_0/2}^{h_0/2} \mathcal{L}_2(x, tG_{tan}(x+t\vec n))~\mbox{d}t
= \frac{h_0}{12} \mathcal{L}_2\left(x, (\nabla(A\vec n) - A\Pi)_{tan}\right).
\end{equation}
We will now use the fundamental balance (\ref{equilib6}) and the above formulas
to recover the Euler-Lagrange equations (\ref{EL1}), (\ref{EL2}) in the limit
as $h\to 0$.

\bigskip

\noindent {\bf Proof of the first Euler-Lagrange equation (\ref{EL1}).}

\noindent Use the variation of the form: $\psi(x+t\vec n) = \phi(x)$ 
in (\ref{equilib6}), divide both sides by $h$ and pass to the limit to obtain:
\begin{equation}\label{zero}
\begin{split}
0 &= \lim_{h\to 0} \int_S\fint_{-h_0/2}^{h_0/2} \mbox{det}(\mbox{Id} + th/h_0\Pi) ~
\left[(Q^h)^T R^h E^h\right]_{TS} : \left[\nabla_{tan} \phi(x)
(\mbox{Id} +th/h_0\Pi)^{-1}\right] ~\mbox{d}t\mbox{d}x\\
&= \int_S\fint_{-h_0/2}^{h_0/2} 
E_{TS}: \nabla_{tan} \psi(x)~\mbox{d}t\mbox{d}x
= \int_S\mathcal{L}_2\left(x, B_{tan}-\frac{\kappa}{2} (A^2)_{tan}\right) :
[\nabla \phi(x)]_{tan}~\mbox{d}x\\
&= \int_S\mathcal{L}_2\left(x, B_{tan}-\frac{\kappa}{2} (A^2)_{tan}\right) :
\mbox{sym }\nabla \phi(x)~\mbox{d}x
\end{split}
\end{equation}
where we have used Lemma \ref{conv2} (i), Lemma \ref{Eprop} and (\ref{Ebar}).
Therefore, by density of $\{\mbox{sym }\nabla \phi\}$ in the space $\mathcal{B}$, 
(\ref{EL1}) follows immediately.

\bigskip

\noindent {\bf Proof of the second Euler-Lagrange equation (\ref{EL2}).}

\noindent  Let $\tilde V\in \mathcal{V}$ and denote by $\tilde A$ the
skew-symmetric matrix field representing $\nabla \tilde V$, as in 
(\ref{Adef-intro}).

\smallskip

{\bf 1.} We now apply (\ref{equilib6}) to a variation of the form:
$\psi(x+t\vec n) = t\tilde A\vec n(x)$. For simplicity, write
$\eta = \tilde A\vec n\in W^{1,2}(S,\mathbb{R}^3)$.
Upon dividing (\ref{equilib6}) by $h$ and passing to the limit, we obtain:
\begin{equation}\label{try}
\begin{split}
0 &= \lim_{h\to 0} \Bigg[\int_S\fint_{-h_0/2}^{h_0/2} \mbox{det}(\mbox{Id} +
th/h_0\Pi) ~\left[(Q^h)^T R^h tE^h\right]_{TS} : \left[\nabla_{tan} \eta(x)
(\mbox{Id} +th/h_0\Pi)^{-1}\right] ~\mbox{d}t\mbox{d}x\\
& \qquad \qquad
+ \frac{h_0}{h} \int_S\fint_{-h_0/2}^{h_0/2}((Q^h)^T R^h E^h\vec n)~ \eta(x)
~\mbox{d}t\mbox{d}x\\
& \qquad \qquad
+ \int_S\fint_{-h_0/2}^{h_0/2}(t\mbox{ trace }\Pi + t^2h/h_0 \det \Pi)
((Q^h)^T R^h E^h\vec n)~ \eta(x)~\mbox{d}t\mbox{d}x\Bigg],
\end{split}
\end{equation}
where we used the identity:
$$\det (\mbox{Id} + th/h_0\Pi) = 1 + th/h_0 \mbox{trace }\Pi + t^2 h^2/h_0^2
\det \Pi. $$
The first term in (\ref{try}), in view of Lemma \ref{conv2} (ii),
Lemma \ref{Eprop} and (\ref{Ehat}), converges to:
\begin{equation*}
\begin{split}
\int_S\fint_{-h_0/2}^{h_0/2} tE_{TS}: \nabla_{tan} \eta(x)~\mbox{d}t\mbox{d}x
&= \int_S\hat E_{tan} : (\nabla \eta(x))_{tan}~\mbox{d}t\mbox{d}x \\
&= \frac{h_0}{12} \int_S \mathcal{L}_2\left(x, (\nabla(A\vec n) -
  A\Pi)_{tan}\right) : (\nabla \eta(x))_{tan} ~\mbox{d}x.
\end{split}
\end{equation*}
In turn, the third term in (\ref{try}) converges to $0$.  
This is because $(Q^h)^T R^h E^h\vec n$ converge weakly in
$L^2(S^{h_0},\mathbb{R}^3)$ to $E\vec n = 0$, by Lemma \ref{conv2} (ii) 
and Lemma \ref{Eprop} (i). Summarizing, (\ref{try}) yields:
\begin{equation}\label{due}
\begin{split} 
\lim_{h\to 0} \frac{1}{h} 
\int_S\fint_{-h_0/2}^{h_0/2}((Q^h)^T R^h E^h\vec n)~ \tilde A\vec n~\mbox{d}t\mbox{d}x
= -\frac{1}{12} \int_S \mathcal{L}_2\left(x, (\nabla(A\vec n) -
  A\Pi)_{tan}\right) : (\nabla (\tilde A\vec n))_{tan} ~\mbox{d}x.
\end{split}
\end{equation}

\smallskip 

{\bf 2.} Now,  apply (\ref{equilib6}) to the variation
$\psi(x+t\vec n) = \tilde V(x)$, and pass to the limit after dividing 
both sides of (\ref{equilib6}) by $h^2$:
\begin{equation}\label{tre}
\begin{split}
\int_S f(x)\cdot& \bar{Q}\tilde V(x)~\mbox{d}x 
 = \lim_{h\to 0}\int_S f(x)\cdot Q^h\tilde V(x)~\mbox{d}x \\
& = \lim_{h\to 0} \Bigg[ 
\int_S\fint_{-h_0/2}^{h_0/2}
\left[ \frac{1}{h}((Q^h)^T R^h - \mbox{Id}) E^h\right]_{TS} 
: \left[\tilde A(x)_{TS} (\mbox{Id} + th/h_0\mbox{adj }\Pi)\right]
~\mbox{d}t\mbox{d}x\\
& \qquad\qquad
+ \int_S\fint_{-h_0/2}^{h_0/2}\frac{1}{h} E^h_{TS}: 
\left[\tilde A(x)_{TS} (\mbox{Id} + th/h_0\mbox{adj }\Pi)\right]
~\mbox{d}t\mbox{d}x\Bigg]\\
& :=  \lim_{h\to 0} [I_h + II_h],
\end{split}
\end{equation}
where we used the definition of the adjoint matrix:
$$\mbox{det}(\mbox{Id} + th/h_0\Pi) ~(\mbox{Id} +
th/h_0\Pi)^{-1} = \mbox{adj } (\mbox{Id} + th/h_0\Pi) 
=  \mbox{Id} + th/h_0 \mbox{adj } \Pi. $$
Notice that, by Lemma \ref{approx} (ii) and 
(\ref{scaling-intro}), the matrix field:
$$ 1/h((Q^h)^T R^h - \mbox{Id}) = (\sqrt{e^h}/h^2)
h/\sqrt{e^h} ((Q^h)^T R^h - \mbox{Id})$$ 
converges to $\kappa A$, weakly in $W^{1,2}(S)$ and hence strongly in $L^2(S)$. 
Hence, by the weak convergence of $E^h$ to
$E$ and the uniform convergence of $ (\mbox{Id} + th/h_0\mbox{adj }\Pi)$ to
$\mbox{Id}$, the first term of (\ref{tre}) converges to:
\begin{equation}\label{Ih}
\begin{split}
\lim_{h\to 0} I_h & = \kappa \int_S\fint_{-h_0/2}^{h_0/2}
(AE)_{TS} : \tilde A_{TS}
= \kappa \int_S (A\bar E)_{TS}: \tilde A_{TS}
= \kappa \int_S (A\bar E): \tilde A \\
& = - \kappa \int_S \bar E: (A\tilde A)
=  - \kappa \int_S \bar E_{tan}: (A\tilde A)_{tan}\\
& = - \kappa \int_S \mathcal{L}_2\left(x, B_{tan}-\frac{\kappa}{2}
  (A^2)_{tan}\right) : (A\tilde A)_{tan} ~\mbox{d}x,
\end{split}
\end{equation}
where we also have used Lemma \ref{Eprop} and (\ref{Ebar}).

\smallskip 

{\bf 3.} Towards finding the limit of $II_h$ in (\ref{tre}),
consider first the contribution of the tangential minors. 
By Lemma \ref{Ehsym} (ii) and since $\tilde A\in L^p(S^{h_0})$ for
all $p\geq 1$, one observes that:
\begin{equation}\label{help}
\lim_{h\to 0} \frac{1}{h}\int_S \fint_{-h_0/2}^{h_0/2} \mbox{ skew } E^h_{tan} :
\tilde A_{tan} = 0.
\end{equation}
Hence:
\begin{equation}\label{blah1}
\begin{split}
\lim_{h\to 0} \int_S&\fint_{-h_0/2}^{h_0/2} \frac{1}{h} E^h_{tan} :
\left[\tilde A(x)_{tan}(\mbox{Id} + th/h_0 \mbox{ adj }\Pi)\right]
~\mbox{d}t\mbox{d}x\\
& = \frac{1}{h_0}\lim_{h\to 0} \int_S\fint_{-h_0/2}^{h_0/2} tE^h_{tan} :
\left[\tilde A_{tan} \mbox{ adj } \Pi\right]
= \frac{1}{h_0}\lim_{h\to 0} \int_S \hat E_{tan} : 
\left[\tilde A_{tan} \mbox{ adj } \Pi\right]\\
& = - \frac{1}{h_0}\lim_{h\to 0} \int_S \hat E_{tan} : (\tilde A_{tan} \Pi)^T
= - \frac{1}{12} \int_S \mathcal{L}_2\left(x, (\nabla(A\vec n) 
- A\Pi)_{tan}\right) : (\tilde A\Pi)_{tan}~\mbox{d}x,
\end{split}
\end{equation}
where we have used (\ref{Ehat}) and Lemma \ref{Eprop} (ii), combined with
the following formula, which can be easily checked for $\tilde A_{tan}\in so(2)$:
$$ \tilde A_{tan} \mbox{ adj } \Pi = -(\tilde A_{tan}\Pi)^T.$$

\smallskip

Further, by (\ref{due}):
\begin{equation}\label{blah2}
\begin{split}
\lim_{h\to 0} \int_S\fint_{-h_0/2}^{h_0/2} \frac{1}{h} &\Big((E^h)^T\vec n\Big) 
\Big((\tilde A)^T\vec n\Big)~\mbox{d}t\mbox{d}x\\
&= - \lim_{h\to 0} \frac{1}{h} \int_S\fint_{-h_0/2}^{h_0/2} ((Q^h)^T R^h E^h \vec n)
(\tilde A \vec n) \\
& \qquad \qquad 
+ \lim_{h\to 0} \int_S\fint_{-h_0/2}^{h_0/2} \left[\frac{1}{h} ((Q^h)^TR^h -\mbox{Id})
(E^h\vec n)\right] (\tilde A\vec n)\\
& \qquad \qquad 
+ 2 \lim_{h\to 0} \int_S\fint_{-h_0/2}^{h_0/2} \left[\frac{1}{h}
(\mbox{ skew } E^h)\vec n\right] (\tilde A\vec n)\\
& = \frac{1}{12} \int_S \mathcal{L}_2\left(x, (\nabla(A\vec n) 
- A\Pi)_{tan}\right) : (\nabla(\tilde A\vec n))_{tan}~\mbox{d}x.
\end{split}
\end{equation}
Indeed, $1/h ((Q^h)^TR^h - \mbox{Id})$ converges to $\kappa A$
weakly in $L^4(S)$ while $\tilde A\vec n\in L^4(S)$ and $\bar E^h\vec n$
converges to $0$ weakly in $L^2(S)$.  Therefore the second term in (\ref{blah2}) 
converges to $0$.  The last limiting term there vanishes as well, by Lemma
\ref{Ehsym} (ii) as in (\ref{help}).

\smallskip

Finally, we have:
\begin{equation}\label{blah3}
\lim_{h\to 0} \frac{1}{h_0}\int_S\fint_{-h_0/2}^{h_0/2} \Big(\vec n^T tE^h\Big)_{tan}
(\mbox{ adj }\Pi) \Big((\tilde A)^T \vec n\Big)_{tan} 
~\mbox{d}t\mbox{d}x = 0,
\end{equation}
because $(\hat E^h)^T\vec n$ converges to $0$ weakly in $L^2(S)$ by
Lemma \ref{conv2} (i) and Lemma \ref{Eprop}.

\smallskip

Adding now (\ref{blah1}), (\ref{blah2}) and (\ref{blah3}) we obtain:
\begin{equation}\label{IIh}
\lim_{h\to 0} II_h = \frac{1}{12} \int_S \mathcal{L}_2\left(x, (\nabla(A\vec n) 
- A\Pi)_{tan}\right) : (\nabla(\tilde A\vec n) - \tilde A\Pi)_{tan}
~\mbox{d}x.
\end{equation}
Together with (\ref{tre}) and (\ref{Ih}), the formula 
(\ref{IIh}) implies (\ref{EL2}).
\endproof

\section{Convergence of critical points of the $3$d
energy functionals}\label{sec_observation}

In this section we prove Theorem \ref{th-main2}.  Proceeding 
as in the proof of Theorem  \ref{th-main}, one needs to exchange the expression  
$\int_{S^h} DW(\nabla u^h) : \nabla \phi^h$ by that of 
$\lim_{\epsilon\to 0} \int_{S^h} \frac{1}{\epsilon}
\left[W(\nabla u^h + \epsilon \nabla \phi^h) - W(\nabla u^h)
\right]~\mbox{d}z$.  
As shown below, the error given by the difference of these two quantities,
converges to $0$ as $h\to 0$, after 
an appropriate scaling by powers of $h$ and $\sqrt{e^h}$ and along the 
variations $\phi^h$ used in the proof of Theorem \ref{th-main}.

We first prove a more general lemma, in which we derive the optimal 
asymptotic properties a sequence $u^h$ must satisfy 
in order that the conclusions of Theorem \ref{th-main} hold true. 
These properties will later be established 
for the critical points (\ref{equilib}) of the functional $J^h$. 

\begin{lemma}\label{general}
Assume (\ref{W-assump}) and (\ref{W-nonphys2}). Let $u^h\in W^{1,2}(S^h,
\mathbb{R}^3)$ be a sequence of deformations, satisfying:
$E^h(u^h)\leq C e^h$ where the scaling $e^h$ is as in (\ref{scaling-intro}).
For every $\psi\in W^{1,2}(S^{h_0}, \mathbb{R}^3)$, consider the rescaled
variations $\phi^h$ given by (\ref{changevar}) and define the corresponding
error terms:
$$\mathcal{E}_h(\psi) = \int_{S^h} DW(\nabla u^h) : \nabla\phi^h - \int_{S^h}
f^h\phi^h.$$ 
Then all assertions of Theorem \ref{th-main} hold, provided that:
\begin{enumerate}
\item[(i)] $\displaystyle{\lim_{h\to 0} \frac{1}{\sqrt{e^h}} 
\mathcal{E}_h(\psi) = 0,}$ for all  $\psi\in W^{1,2}(S^{h_0}, \mathbb{R}^3)$,
\item[(ii)] $\displaystyle{\lim_{h\to 0} \frac{1}{h\sqrt{e^h}}
    \mathcal{E}_h(\psi) = 0,}$
for all  $\psi$ of the form $\psi(x+t\vec n) = \phi(x)$, $\phi\in W^{1,2}(S,
\mathbb{R}^3)$, and all  $\psi$ of the form $\psi(x+t\vec n) = t\tilde A\vec
n(x)$ with $\tilde A$ given as in (\ref{Adef-intro}) for some 
$\tilde V\in \mathcal{V}$,
\item[(iii)] $\displaystyle{\lim_{h\to 0} \frac{1}{h^2\sqrt{e^h}} 
\mathcal{E}_h(\psi) = 0,}$
for all  $\psi$  of the form $\psi(x+t\vec n) = \tilde V(x)$ with 
$\tilde V\in \mathcal{V}$.
\end{enumerate}
\end{lemma}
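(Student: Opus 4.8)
The plan is to re-run the entire argument of Theorem \ref{th-main} (i.e.\ the previous section), but with the equilibrium identity (\ref{equilib_form}) replaced by the \emph{perturbed} identity $\int_{S^h} DW(\nabla u^h):\nabla\phi^h - \int_{S^h} f^h\phi^h = \mathcal{E}_h(\psi)$, and to check at each instance that the extra error term is harmless in the limit. Concretely: assertions (i), (ii), (iii) of Theorem \ref{th-main} are exactly the conclusions of Lemma \ref{approx}, which only uses the energy bound $E^h(u^h)\le Ce^h$ and (\ref{scaling-intro}); so they hold regardless of $\mathcal{E}_h$. Thus the only thing to redo is the derivation of the two Euler--Lagrange equations (\ref{EL1}) and (\ref{EL2}), together with the auxiliary facts $E\vec n=0$ (Lemma \ref{Eprop}(i)) that feed into them.

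First I would rewrite the perturbed identity in the form (\ref{equilib6}), exactly as in the previous section, picking up on the right-hand side the term $\frac{1}{\sqrt{e^h}}\mathcal{E}_h(\psi)$. Then:

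\begin{itemize}
\item[$\bullet$] For $E\vec n=0$: take a general $\psi\in W^{1,2}(S^{h_0},\mathbb{R}^3)$, pass $h\to 0$ in (\ref{equilib6}); the new contribution is $\frac{1}{\sqrt{e^h}}\mathcal{E}_h(\psi)\to 0$ by hypothesis (i). Hence (\ref{uno}) still holds, and the rest of the proof of Lemma \ref{Eprop} is unchanged (it is purely algebraic, using only frame indifference). Likewise Lemma \ref{conv2} and Lemma \ref{Ehsym} are untouched, as they never used (\ref{equilib_form}).
\item[$\bullet$] For (\ref{EL1}): use $\psi(x+t\vec n)=\phi(x)$, divide (\ref{equilib6}) by $h$. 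The error becomes $\frac{1}{h\sqrt{e^h}}\mathcal{E}_h(\psi)\to 0$ by hypothesis (ii) (first family). So (\ref{zero}) survives verbatim and (\ref{EL1}) follows by density of $\{\mathrm{sym}\,\nabla\phi\}$ in $\mathcal{B}$.
\item[$\bullet$] For (\ref{EL2}): the proof splits into the three steps of the previous section. Step 1 uses $\psi(x+t\vec n)=t\tilde A\vec n(x)$ divided by $h$: error $\frac{1}{h\sqrt{e^h}}\mathcal{E}_h(\psi)\to 0$ by hypothesis (ii) (second family), so (\ref{due}) holds. Step 2 uses $\psi(x+t\vec n)=\tilde V(x)$ divided by $h^2$: error $\frac{1}{h^2\sqrt{e^h}}\mathcal{E}_h(\psi)\to 0$ by hypothesis (iii), so (\ref{tre}) holds with the same $I_h$, $II_h$. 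Steps 1 and 3 of the limit computation of $II_h$ are identical. Combining gives (\ref{IIh}) and hence (\ref{EL2}); and the $\kappa=0$ case follows as before.
\end{itemize}

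The only genuine content is bookkeeping: one must verify that each variation $\psi$ used in Section \ref{mainproof} is of one of the three types listed in (i)--(iii), and that it is divided by the matching power of $h$ before passing to the limit, so that exactly the stated scaled error $\mathcal{E}_h(\psi)/(h^k\sqrt{e^h})$ appears. The main point to be careful about is the third step in the proof of (\ref{EL2}) (equations (\ref{blah1})--(\ref{blah3})): there the variation $\psi(x+t\vec n)=\tilde V(x)$ is re-used, but the decomposition into $I_h$ and $II_h$ does not introduce any \emph{new} variation — the error term attached to this $\psi$ is controlled once and for all by hypothesis (iii), and the subsequent manipulations (\ref{help})--(\ref{blah3}) are again purely about the convergence of $E^h$, $G^h$ and $(Q^h)^TR^h$, which are independent of $\mathcal{E}_h$. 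I expect no real obstacle: the hypotheses (i)--(iii) have been reverse-engineered precisely so that every place where (\ref{equilib_form}) was invoked in Section \ref{mainproof} now carries a vanishing remainder. The substantive work — showing that the critical-point condition (\ref{equilib}) actually produces errors $\mathcal{E}_h(\psi)$ with these decay rates — is deferred to the application of this lemma, where one estimates $\big|\frac{1}{\epsilon}(W(\nabla u^h+\epsilon\nabla\phi^h)-W(\nabla u^h)) - DW(\nabla u^h):\nabla\phi^h\big|$ using (\ref{W-nonphys2}), the $C^2$ regularity of $W$ near $SO(3)$, and the energy bound.
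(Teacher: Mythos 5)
Your proposal is correct and takes essentially the same approach as the paper, whose proof is a terse ``direct inspection of the proof of Theorem \ref{th-main}'': hypothesis (i) yields (\ref{uno}), hypothesis (ii) yields (\ref{zero}) and (\ref{try})--(\ref{due}), and hypothesis (iii) yields (\ref{tre}). You also correctly note that Lemmas \ref{approx}, \ref{conv2} and \ref{Ehsym} depend only on the energy bound and assumption (\ref{W-nonphys2}), not on the equilibrium equation, and therefore carry over verbatim.
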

\begin{proof}
The proof follows by a direct inspection of the proof of Theorem
\ref{th-main}.  Indeed, (i) is needed to derive (\ref{uno}),
(ii) serves for getting (\ref{zero}) and (\ref{due}), through (\ref{try}),
while (iii) implies (\ref{tre}).
\end{proof}

Theorem \ref{th-main2} is now a consequence of the following
observation:
\begin{lemma}
Assume (\ref{W-assump}) and (\ref{W-nonphys2}). Let $u^h\in W^{1,2}(S^h,
\mathbb{R}^3)$  satisfy:
$E^h(u^h)\leq C e^h$ with the scaling $e^h$ is as in (\ref{scaling-intro}).
If (\ref{equilib}) holds then the conditions (i), (ii), (iii)
in Lemma \ref{general} are fulfilled.
\end{lemma}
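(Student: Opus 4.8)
The plan is to show that for the critical point condition (\ref{equilib}), the error term $\mathcal{E}_h(\psi)$, which measures the discrepancy between the true derivative $\frac{d}{d\epsilon}\big|_{\epsilon=0} J^h(u^h+\epsilon\phi^h)$ and the naive ``differentiate under the integral sign'' quantity $\int_{S^h} DW(\nabla u^h):\nabla\phi^h - \int_{S^h} f^h\phi^h$, is small. Since (\ref{equilib}) says the former is zero, we have $\mathcal{E}_h(\psi) = -\frac{d}{d\epsilon}\big|_{\epsilon=0}\int_{S^h} W(\nabla u^h)\,\mathrm{d}z + \int_{S^h} DW(\nabla u^h):\nabla\phi^h\,\mathrm{d}z$ (the force terms are linear in $u^h$ and cancel exactly), so the whole issue is the interchange of limit and integral for the stored energy. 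The standard device here, following \cite{MuPa}, is to write, for each fixed $z$,
\begin{equation*}
\frac{1}{\epsilon}\big(W(\nabla u^h + \epsilon\nabla\phi^h) - W(\nabla u^h)\big) - DW(\nabla u^h):\nabla\phi^h = \int_0^1 \big(DW(\nabla u^h + s\epsilon\nabla\phi^h) - DW(\nabla u^h)\big):\nabla\phi^h \,\mathrm{d}s,
\end{equation*}
and estimate the right-hand side using the growth bound (\ref{W-nonphys2}), i.e. $|DW(F)|\le C(|F|+1)$, which gives an integrand dominated by $C(|\nabla u^h| + |\nabla\phi^h| + 1)|\nabla\phi^h|$, uniformly in $\epsilon\in(0,1]$ and $s\in[0,1]$. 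This is integrable over $S^h$ since $\nabla u^h\in L^2$ (from $E^h(u^h)\le Ce^h$ together with (\ref{W-assump})) and $\nabla\phi^h\in L^2$, so dominated convergence lets us pass $\epsilon\to 0$ and conclude $\mathcal{E}_h(\psi)$ is well-defined and equals zero for each fixed $h$ — wait, that would make the lemma trivial.

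So the real content is different: for a \emph{fixed} $h$, (\ref{equilib}) does NOT immediately give (\ref{equilib_form}), precisely because $\nabla\phi^h$ need not be bounded in $L^\infty$, and dominated convergence fails when $DW$ only has the a priori bound (\ref{W-nonphys2}) — the integrand $\big(DW(\nabla u^h + s\epsilon\nabla\phi^h) - DW(\nabla u^h)\big):\nabla\phi^h$ is controlled only in $L^1$ by something like $\epsilon|\nabla\phi^h|^2$ after using that $DW$ is $C^1$ near $SO(3)$ but merely has linear growth globally; more carefully, $|DW(F+\epsilon H) - DW(F)| \le C\epsilon|H| + (\text{terms from the non-Lipschitz region})$. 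Actually the correct reading, matching \cite{MuPa}, is: one cannot interchange for general $\phi^h$, but one \emph{can} control $\mathcal{E}_h(\psi)$ after dividing by the appropriate power of $h$ and $\sqrt{e^h}$, because the specific variations $\psi$ appearing in Lemma \ref{general}(i)--(iii) are all smooth (or $W^{1,2}$ with the specific structure $\phi(x)$, $t\tilde A\vec n(x)$, $\tilde V(x)$), hence $\nabla\phi^h\in L^\infty(S^h)$ once $\tilde V, \tilde A$ are first mollified — and the proof in \cite{MuPa} handles the mollification error. Thus my plan is: first, reduce $\mathcal{E}_h(\psi)$ to the $\epsilon\to 0$ limit of $\int_{S^h}\int_0^1 (DW(\nabla u^h + s\epsilon\nabla\phi^h) - DW(\nabla u^h)):\nabla\phi^h\,\mathrm{d}s\,\mathrm{d}z$; second, split $S^h$ into the ``good set'' $G^h_\epsilon = \{z : |\nabla u^h(z)| \le \delta/\epsilon\}$ where the $C^1$-regularity of $W$ near $SO(3)$ — or rather the uniform continuity of $DW$ on bounded sets — forces the integrand to $0$ as $\epsilon\to 0$, and the ``bad set'' $B^h_\epsilon$ where $|\nabla u^h|$ is large; third, on $B^h_\epsilon$ use (\ref{W-nonphys2}) to bound the integrand by $C(|\nabla u^h|+1)|\nabla\phi^h| \le C(|\nabla u^h|+1)\|\nabla\phi^h\|_{L^\infty}$ and note $|B^h_\epsilon|\to 0$ and $\int_{B^h_\epsilon}|\nabla u^h| \to 0$ by absolute continuity of the integral (since $\nabla u^h\in L^2(S^h)\subset L^1(S^h)$ for fixed $h$). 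This shows $\mathcal{E}_h(\psi) = 0$ for each fixed $h$ when $\psi$ gives $\nabla\phi^h\in L^\infty$; but for general $W^{1,2}$ variations $\psi$ one keeps a small remainder.

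Concretely, I would argue as follows. For the variations required in Lemma \ref{general} — namely $\psi = \phi$, $\psi = t\tilde A\vec n$, $\psi = \tilde V$ — approximate $\phi$, $\tilde V$ (and the associated $\tilde A$) by smooth fields $\phi_\delta, \tilde V_\delta$ in $W^{1,2}(S)$, noting that the maps $\tilde V \mapsto$ (variation) and $V\mapsto A$ are continuous, so the contribution of the approximation error to each of $\frac{1}{\sqrt{e^h}}\mathcal{E}_h$, $\frac{1}{h\sqrt{e^h}}\mathcal{E}_h$, $\frac{1}{h^2\sqrt{e^h}}\mathcal{E}_h$ can be made arbitrarily small, \emph{uniformly in $h$}, by the estimate $|\mathcal{E}_h(\psi_1) - \mathcal{E}_h(\psi_2)| \le C\|\nabla\phi^h_1 - \nabla\phi^h_2\|_{L^2(S^h)}(\|\nabla u^h\|_{L^2(S^h)} + \|\nabla\phi^h_1\|_{L^2} + \|\nabla\phi^h_2\|_{L^2} + |S^h|^{1/2})$ coming from (\ref{W-nonphys2}), together with $\|\nabla u^h\|_{L^2(S^h)} \le C\sqrt{E^h(u^h) \cdot h} \le C\sqrt{e^h h}$ and the fact that rescaling to $S^{h_0}$ turns these $L^2$ norms into the right powers of $h$ and $\sqrt{e^h}$. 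For the smooth variations, $\nabla\phi^h_\delta\in L^\infty(S^h)$ with norm controlled (uniformly in $h$, up to the $h_0/h$ factor in the normal direction which only affects one component and is handled exactly as the $\partial_{\vec n}\psi$ terms in Section \ref{mainproof}), and then the good set / bad set splitting above — with $\delta$ depending on the modulus of continuity of $DW$ near $SO(3)$ — gives $\mathcal{E}_h(\psi_\delta) \to 0$ as $h\to 0$ at the required rate, because $\|\mathrm{dist}(\nabla u^h, SO(3))\|_{L^2(S^h)}^2 \le C E^h(u^h) h \le C e^h h$ controls the measure of the bad set via Chebyshev. Finally, combining the uniform-in-$h$ smallness of the approximation error with the $h\to 0$ vanishing of the smooth-variation error, a standard diagonal argument yields (i), (ii), (iii) of Lemma \ref{general}, and Theorem \ref{th-main2} follows.

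The main obstacle I anticipate is the bookkeeping of scalings: each of the three conditions in Lemma \ref{general} demands a different power, $\sqrt{e^h}$, $h\sqrt{e^h}$, $h^2\sqrt{e^h}$, and one must verify that the bad-set contribution — roughly $\|\nabla\phi^h_\delta\|_{L^\infty(S^h)} \cdot \big(\int_{B^h_\epsilon}(|\nabla u^h| + 1)\big)$ — genuinely beats even the most demanding $h^2\sqrt{e^h}$ normalization in case (iii), which needs the $L^\infty$ bound on $\nabla\phi^h_\delta$ for the variation $\psi = \tilde V(x)$ (these are $O(1)$, so after rescaling $\nabla\phi^h$ picks up no dangerous $h^{-1}$ except in the normal direction, where $E^h\vec n\to 0$ saves us exactly as in (\ref{blah2})--(\ref{blah3})). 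The delicate point, as in \cite{MuPa}, is that the error is only controlled \emph{after} using the specific structure of these variations — a general $W^{1,2}$ variation would leave an error of size comparable to $\sqrt{e^h}$ itself — so one must not attempt to prove the full equivalence of (\ref{equilib}) and (\ref{equilib_form}), only the vanishing of $\mathcal{E}_h$ along the distinguished test directions, which is exactly what Lemma \ref{general} was tailored to require.
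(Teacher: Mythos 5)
Your overall strategy — pass to the quantity $\mathcal{E}'_h(\psi)$, split $S^h$ into a good set and a bad set, use the $C^2$ regularity of $W$ near $SO(3)$ on the former and the growth bound (\ref{W-nonphys2}) on the latter, then estimate the bad-set contribution by Chebyshev on the measure — is the right skeleton and matches the paper's. But two of your ingredients are wrong or unavailable, and a third detour is unnecessary.

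First, your good set $G^h_\epsilon=\{|\nabla u^h|\le\delta/\epsilon\}$ is the wrong set, and the reason you give for it (``uniform continuity of $DW$ on bounded sets'') is not a hypothesis you have: the paper only assumes $W\in\mathcal C^2$ in a neighborhood of $SO(3)$, and $DW$ may well be discontinuous on bounded sets far from $SO(3)$ while still satisfying (\ref{W-nonphys2}). The set where $|\nabla u^h|$ is bounded contains plenty of points where $\nabla u^h$ is far from $SO(3)$, so the mean-value/modulus-of-continuity estimate you want to run there is not justified. The paper's good set is $\Omega_{h,\epsilon}=\{\mathrm{dist}(\nabla u^h,SO(3))<\delta\ \text{and}\ \epsilon|\nabla\phi^h|<\delta\}$, both conditions being essential: together they force $\nabla u^h$ \emph{and} $\nabla u^h+s\epsilon\nabla\phi^h$ into the $3\delta$-neighborhood of $SO(3)$ where $W$ is $\mathcal C^2$, so the mean value theorem there gives the $C\epsilon|\nabla\phi^h|$ bound. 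On the complement one uses $|DW|\le C\,\mathrm{dist}(\cdot,SO(3))$ (the reformulation of (\ref{W-nonphys2})), not $|DW|\le C(|F|+1)$ applied to $|\nabla u^h|$; this is decisive because $\mathrm{dist}(\nabla u^h,SO(3))$ is $O(\sqrt{e^h})$ in $L^2(S^h)$ via (\ref{W-assump}) and the energy bound, whereas $|\nabla u^h|$ itself is only $O(1)$.

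Second, the mollification/$L^\infty$ step is an unnecessary detour (and, as your own ``main obstacle'' paragraph worries, hard to make close the scaling gap at level (iii)). The paper never mollifies. Since $S$ is two-dimensional, $W^{1,2}(S)\hookrightarrow L^6(S)$, so for the particular $\psi$ in Lemma \ref{general}(ii)--(iii), $\partial_{\vec n}\psi=\tilde A\vec n$ and $\nabla\tilde V$ lie in $L^6(S)$, and one controls the bad-set integral simply by H\"older, $\|\cdot\|_{L^2(\omega_{h,\epsilon})}\le|\omega_{h,\epsilon}|^{1/3}\|\cdot\|_{L^6(S)}$, together with the Chebyshev bound $|\omega_{h,\epsilon}|\lesssim e^h$ after $\epsilon\to0$. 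This is both cleaner and precisely calibrated to beat the $h^2\sqrt{e^h}$ normalization. Relatedly, your remark that $\mathcal{E}_h(\psi)=0$ at fixed $h$ when $\nabla\phi^h\in L^\infty$ is not what happens and is not needed: at fixed $h$, after $\epsilon\to 0$ the bad set still has measure of order $he^h$, and $\mathcal{E}'_h(\psi)$ is merely small after the correct rescaling, not zero.

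Finally, a bookkeeping point you should be careful about: for the variation $\psi(x+t\vec n)=\tilde V(x)$ in case (iii), $\partial_{\vec n}\psi\equiv 0$, so there is no $h^{-1}$ factor in the normal direction; the ``saved by $E\vec n\to 0$ as in (\ref{blah2})--(\ref{blah3})'' remark is a confusion with Section \ref{mainproof}, which estimates a different quantity and is not part of the error bound here.
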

\begin{proof}
In view of (\ref{equilib}), it is enough to prove that (i), (ii), and (iii)
in Lemma \ref{general} hold with $\mathcal{E}_h(\psi)$ replaced by 
a more convenient error quantity:
$$\mathcal{E}'_h(\psi) = \lim_{\epsilon\to 0} \mathcal{E}'_{h,\epsilon}(\psi),
\qquad
\mathcal{E}'_{h,\epsilon}(\psi) = 
\int_{S^h} \left[\frac{1}{\epsilon} \left(W(\nabla u^h + \epsilon\nabla\phi^h)
- W(\nabla u^h)\right)\right] - DW(\nabla u^h) : \nabla\phi^h ~\mbox{d}z,$$
and the rescaled variations $\phi^h$ given by (\ref{changevar}).

\smallskip

Define the good sets: 
$\Omega_{h,\epsilon}= \{z\in S^h; ~\mbox{dist}(\nabla u^h(z), 
SO(3)) <\delta  \mbox{ and } \epsilon |\nabla\phi^h(z)|<\delta\} $,
with $\delta>0$ small enough for $W$ to be $\mathcal{C}^2$ in the open
neighborhood $\{F\in\mathbb{R}^{3\times 3}; ~ \mbox{dist}(F, SO(3))<3\delta\}.$
We will estimate $\mathcal{E}'_{h,\epsilon}(\psi)$ by writing it as a sum 
of two integrals: one over $\Omega_{h,\epsilon}$ and the other over
$S^h\setminus \Omega_{h,\epsilon}$. Apply the mean value theorem to 
the continuous function $DW$ in the first integral, while in the second integral 
we use the assumption (\ref{W-nonphys2}):
\begin{equation}\label{m1}
\begin{split}
&\left|\mathcal{E}'_{h,\epsilon}(\psi)\right|  
 = \left|\int_{S^h} \int_0^1 \left[DW(\nabla u^h + \epsilon s \nabla\phi^h)
- DW(\nabla u^h)\right]~\mbox{d}s : \nabla\phi^h~\mbox{d}z\right|\\
& \qquad
\leq C\epsilon \int_{\Omega_{h,\epsilon}}|\nabla\phi^h|~\mbox{d}z\\
& \qquad\qquad
 + C\int_{S^h\setminus \Omega_{h,\epsilon}}\int_0^1 
\left[\mbox{dist}(\nabla u^h + \epsilon s \nabla\phi^h, SO(3)) + 
\mbox{dist}(\nabla u^h, SO(3))\right]~\mbox{d}s 
\cdot |\nabla\phi^h|~\mbox{d}z\\
& \qquad
\leq  C\epsilon \int_{\Omega_{h,\epsilon}}|\nabla\phi^h|
+ C \int_{S^h\setminus \Omega_{h,\epsilon}} 
\mbox{dist}(\nabla u^h, SO(3)) |\nabla\phi^h|
+C\epsilon \int_{S^h\setminus \Omega_{h,\epsilon}}|\nabla\phi^h|^2.
\end{split}
\end{equation}
We see that the first and the third term above converge to $0$ as $\epsilon\to 0$.
To treat the second term, notice that by (\ref{W-assump}), the energy bound, 
and (\ref{gradient}):
\begin{equation}\label{m2}
\begin{split}
\int_{S^h\setminus \Omega_{h,\epsilon}}
\mbox{dist}&(\nabla u^h, SO(3))|\nabla\phi^h|
\leq C \int_{S^h\setminus \Omega_{h,\epsilon}} W(\nabla u^h)^{1/2} |\nabla\phi^h|\\
& \leq C \left[hE^h(u^h)\right]^{1/2} 
\|\nabla\phi^h\|_{L^2(S^h\setminus \Omega_{h,\epsilon})}   \leq
C h^{1/2}\sqrt{e^h} \|\nabla\phi^h\|_{L^2(S^h\setminus \Omega_{h,\epsilon})} 
\end{split}
\end{equation}
Observe also that:
$$\int_{S^h\setminus \Omega_{h,\epsilon}}|\nabla\phi^h|^2
\leq C \int_{S^{h_0}\setminus \omega_{h,\epsilon}}\left[h|\nabla_{tan}\psi|^2
+ \frac{1}{h}|\partial_{\vec n}\psi|^2\right]~\mbox{d}z,$$
where the set $\omega_{h,\epsilon}=S^{h_0}\setminus \left\{x+t\vec n; 
~ x+th/h_0\vec n\in \Omega_{h,\epsilon}\right\}$. Its measure can be
estimated as:
\begin{equation}\label{m3}
\begin{split}
|\omega_{h,\epsilon}|&\leq {C}/{h}|S^h\setminus \Omega_{h,\epsilon}|\\
&\leq {C}/{h} \left\{ |z\in S^h; ~ \mbox{dist}(\nabla u^h(z), SO(3))\geq \delta|
+ |z\in S^h; ~ \epsilon |\nabla\phi^h(z)|\geq\delta|\right\}\\
&\leq {C}/{h}\left\{\int_{S^h} W(\nabla u^h) 
+ \epsilon^2\int_{S^h}|\nabla\phi^h|^2\right\}.
\end{split}
\end{equation}
In particular:
\begin{equation}\label{m5}
\lim_{h\to 0}\lim_{\epsilon\to 0} |\omega_{h,\epsilon}| = 0.
\end{equation}

\smallskip

We now prove (i). Passing to the limit in (\ref{m1}) and (\ref{m2}), and using 
(\ref{m3}) with (\ref{m5}) we obtain:
\begin{equation*}
\begin{split}
\lim_{h\to 0}\lim_{\epsilon\to 0} \frac{1}{\sqrt{e^h}} 
|\mathcal{E}'_{h,\epsilon}(\psi)|& \leq C \lim_{h\to 0}\lim_{\epsilon\to 0}
h^{1/2} \|\nabla\phi^h\|_{L^2(S^h\setminus \Omega_{h,\epsilon})} \\
& \leq \lim_{h\to 0}\lim_{\epsilon\to 0}\left(h\|\nabla_{tan}\psi\|_{L^2(S^{h_0})}
+ \|\partial_{\vec n}\psi\|_{L^2(\omega_{h,\epsilon})} \right)= 0.
\end{split}
\end{equation*}

\smallskip

\noindent To prove (ii), consider:
\begin{equation*}
\begin{split}
\lim_{h\to 0}\lim_{\epsilon\to 0} \frac{1}{h\sqrt{e^h}} 
|\mathcal{E}'_{h,\epsilon}(\psi)| & \leq 
C \lim_{h\to 0}\lim_{\epsilon\to 0} h^{-1/2} 
\|\nabla\phi^h\|_{L^2(S^h\setminus \Omega_{h,\epsilon})}\\
& \leq C \lim_{h\to 0}\lim_{\epsilon\to 0}
\left(\|\nabla_{tan}\psi\|_{L^2(\omega_{h,\epsilon})}
+ \frac{1}{h} \|\partial_{\vec n}\psi\|_{L^2(\omega_{h,\epsilon})}\right).
\end{split}
\end{equation*}
The first limit above is $0$ by (\ref{m5}).
Concerning the second term, it may be dropped for $\psi(x+t\vec n) = \phi(x)$,
while in the other case when $\phi(x+t\vec n) = t\tilde A\vec n(x)$ we
have $\partial_{\vec n}\psi = \tilde A\vec n\in W^{1,2}(S)$ and hence:
\begin{equation*}
\begin{split}
\lim_{h\to 0}\lim_{\epsilon\to 0} \frac{1}{h} 
\|\partial_{\vec n}\psi\|_{L^2(\omega_{h,\epsilon})}
\leq \lim_{h\to 0}\lim_{\epsilon\to 0} {C}/{h} |\omega_{h,\epsilon}|^{1/3}
\|\partial_{\vec n}\psi\|_{L^6(S)} \leq \lim_{h\to 0} {C}/{h} (e^h)^{1/3}
= 0,
\end{split}
\end{equation*}
in view of (\ref{scaling-intro}).

\smallskip

To prove (iii) for $\psi(x+t\vec n) = \tilde V(x)$, recall that 
$\nabla \tilde V\in W^{1,2}(S,\mathbb{R}^3)$ and write:
\begin{equation*}
\begin{split}
\lim_{h\to 0}\lim_{\epsilon\to 0} \frac{1}{h^2\sqrt{e^h}} 
|\mathcal{E}'_{h,\epsilon}(\psi)| \leq 
\lim_{h\to 0}\lim_{\epsilon\to 0}  {C}/{h}
\|\nabla\tilde V\|_{L^2(\omega_{h,\epsilon})}
\leq \lim_{h\to 0}\lim_{\epsilon\to 0} {C}/{h} |\omega_{h,\epsilon}|^{1/3}
\|\nabla\tilde V\|_{L^6(S)} = 0,
\end{split}
\end{equation*}
as before. This achieves the proof.
\end{proof}

\section{The limiting rotations $\bar Q$} \label{sec_3rdEL}

In this section we will derive the third Euler-Lagrange equation
(after the first two (\ref{EL1}) and (\ref{EL2})), corresponding
to variation in $\bar Q\in SO(3)$, and under certain nondegeneracy condition.
We first notice that the limiting $\bar Q$ necessarily satisfies the
constraint of the average torque:
\begin{equation}\label{torque}
\tau(\bar Q) = \int_S f\times \bar Qx ~\mbox{d}x = 0. 
\end{equation}
The main difficulty arises now from the fact that the variations must be taken 
inside $SO(3)$ in a way that this constraint remains satisfied.
Assuming that such variations exist, we establish the limit equation under the 
additional condition that $Q^h$ approach $\bar Q$ along a direction 
$U\in T_{\bar Q}SO(3)$ for which $\partial_U \tau(\bar Q)\neq 0$.

In what follows, the crucial role is played by
the function $g(Q) = \int_S f\cdot Qx~\mbox{d}x$
defined on $SO(3)$. Let $B\in \mathbb{R}^{3\times 3}$ be such that: 
$g(Q) = B:Q$, for all $Q\in SO(3)$.

\begin{lemma}
Assume the hypothesis of Theorem \ref{th-main} or Theorem \ref{th-main2}. 
Then the limit
$\bar Q\in SO(3)$ of $Q^h$ must satisfy:
\begin{equation}\label{r0}
\int_S f\cdot \bar Q Fx~\mathrm{d}x = 0 \qquad \forall F\in so(3),
\end{equation}
or equivalently (\ref{torque}).
Another equivalent formulation of (\ref{r0}) is: $\mathrm{skew }(\bar Q^T B)=0$.
\end{lemma}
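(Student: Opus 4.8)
The plan is to test the equilibrium condition against the infinitesimal rigid rotations of the deformed configuration and pass to the limit $h\to 0$, using the frame indifference of $W$ exactly in the form already exploited in the proof of Lemma \ref{Ehsym}.

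Fix $F\in so(3)$ and take the admissible variation $\phi^h = Fu^h\in W^{1,2}(S^h,\mathbb{R}^3)$. Since $F\nabla u^h$ is tangent to $SO(3)\nabla u^h$ at $\nabla u^h$ and $W$ is constant on each orbit $SO(3)G$, one has the pointwise identity $DW(\nabla u^h):\nabla\phi^h = DW(\nabla u^h):F\nabla u^h = 0$ (the identity used for Lemma \ref{Ehsym}). Hence (\ref{equilib_form}) collapses to $\int_{S^h}f^h\cdot Fu^h = 0$. Under the critical-point hypothesis (\ref{equilib}) the same conclusion follows: by frame indifference $W((\mathrm{Id}+\epsilon F)\nabla u^h) = W(\sqrt{\mathrm{Id}+\epsilon^2 F^TF}\,\nabla u^h)$, so that, using (\ref{W-nonphys2}) and the energy bound $E^h(u^h)\le Ce^h$, the stored-energy increment is $O(\epsilon^2)$ times an $L^1(S^h)$ bound; dividing by $\epsilon$, letting $\epsilon\to 0$ and recalling that the force term is linear in $\epsilon$, one again obtains $\int_{S^h}f^h\cdot Fu^h = 0$.

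Next I would substitute $f^h(x+t\vec n) = h\sqrt{e^h}f(x)\det(\mathrm{Id}+t\Pi)^{-1}$, change variables on $S^h$ (Jacobian $\det(\mathrm{Id}+t\Pi)$), rescale the normal variable by $t\mapsto(h/h_0)t$ and use the definition $u^h(x+(h/h_0)t\vec n) = Q^h(y^h(x+t\vec n)+c^h)$. Because $\int_S f = 0$ the constant term $c^h$ drops out, and after dividing by $h^2\sqrt{e^h}/h_0$ one is left with
$$\int_S\int_{-h_0/2}^{h_0/2} f(x)\cdot FQ^h y^h(x+t\vec n)~\mathrm{d}t~\mathrm{d}x = 0.$$
Passing $h\to 0$: by Lemma \ref{approx}, $y^h\to\pi$ strongly in $L^2(S^{h_0})$ and (along a subsequence) $Q^h\to\bar Q$, so $FQ^hy^h\to F\bar Q\pi$ in $L^2(S^{h_0})$, and since $f\in L^2(S)$ and $\pi(x+t\vec n)=x$, the integral converges to $h_0\int_S f(x)\cdot F\bar Q x~\mathrm{d}x$. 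Thus $\int_S f\cdot F\bar Q x~\mathrm{d}x = 0$ for every $F\in so(3)$; since $so(3)\bar Q = \bar Q\,so(3) = T_{\bar Q}SO(3)$, this is precisely (\ref{r0}).

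The three reformulations are then routine linear algebra. Writing $F\in so(3)$ via its axial vector $\omega$ ($Fv=\omega\times v$) gives $f\cdot\bar QFx = -\omega\cdot((\bar Q^Tf)\times x)$; equivariance of the cross product yields $\bar Q\int_S(\bar Q^Tf)\times x~\mathrm{d}x = \int_S f\times\bar Q x~\mathrm{d}x = \tau(\bar Q)$, so (\ref{r0}) $\Leftrightarrow$ (\ref{torque}). Likewise $\int_S f\cdot\bar QFx~\mathrm{d}x = B:(\bar QF) = (\bar Q^TB):F = \mathrm{skew}(\bar Q^TB):F$, which vanishes for all $F\in so(3)$ iff $\mathrm{skew}(\bar Q^TB) = 0$. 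I expect the only delicate point to be the passage $\epsilon\to 0$ in the critical-point case, i.e. showing that the stored-energy part of the variation leaves no trace — this is exactly where (\ref{W-nonphys2}) (quadratic growth of $W$) and the polar-decomposition identity for $W((\mathrm{Id}+\epsilon F)G)$ enter; the change of variables, the vanishing of the $c^h$ term, and the weak/strong limit $h\to 0$ are all straightforward.
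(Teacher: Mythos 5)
Your proof is correct and, at its core, uses the same key mechanism as the paper: test the equilibrium condition with an infinitesimal rotation of the deformed configuration, use frame indifference to make the stored-energy part of the variation vanish identically (this is exactly the identity $DW(F)F^T\in\mathrm{Sym}(3)$ from the proof of Lemma \ref{Ehsym}), and thereby reduce both (\ref{equilib_form}) and (\ref{equilib}) to $\int_{S^h}f^h\cdot Fu^h=0$. Your treatment of the critical-point case via the polar decomposition of $\mathrm{Id}+\epsilon F$ together with the growth bound (\ref{W-nonphys2}) is a valid way to control the $O(\epsilon^2)$ stored-energy remainder; the paper reaches the same conclusion by taking the $\epsilon$-dependent test field $\phi^h=\frac1\epsilon(\exp(\epsilon H)u^h-u^h)$, which is a shorter but slightly more informal version of the same argument.

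Where you genuinely diverge from the paper is in the passage $h\to 0$. You normalize $\int_{S^h}f^h\cdot Fu^h=0$ directly by $h^2\sqrt{e^h}/h_0$, obtain the exact identity $\int_S\fint_{-h_0/2}^{h_0/2}f\cdot FQ^h y^h=0$, and pass to the limit using the strong $L^2$ convergence $y^h\to\pi$ from Lemma \ref{approx}(iii). The paper instead derives the relation (\ref{r2}), namely $\int_S f\cdot Q^hF^hV^h=-\frac{h}{\sqrt{e^h}}\int_S f\cdot Q^hF^hx$, and then uses that the left-hand side is bounded while $h/\sqrt{e^h}\to\infty$ under (\ref{scaling-intro}) to force $\int_S f\cdot\bar QFx=0$. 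Your route is cleaner and entirely self-contained for the present lemma (you never need $V^h$ or $V$); the reason the paper formulates (\ref{r2}) with an arbitrary sequence $F^h\in so(3)$ and with $V^h$ on the left is that the very same identity is recycled in Lemma \ref{lem_EL3} to establish the third Euler--Lagrange equation $\int_S f\cdot\bar QFV=0$. Your translation between $\{F\bar Q:F\in so(3)\}$ and $\{\bar Q G:G\in so(3)\}$, and the two algebraic equivalences with (\ref{torque}) and with $\mathrm{skew}(\bar Q^TB)=0$, are all correct.
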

\begin{proof}
First, for any given $H\in so(3)$, consider the variation $\phi^h = Hu^h$
in the equilibrium equation (\ref{equilib_form}). Recalling that 
$DW(\nabla u^h) (\nabla u^h)^T$ is symmetric (see the proof of 
Lemma \ref{Ehsym}) we obtain:
\begin{equation}\label{r1}
\int_{S^h} f^h\cdot Hu^h = \int_{S^h} DW(\nabla u^h):H\nabla u^h
= \int_{S^h} \Big(DW(\nabla u^h) (\nabla u^h)^T\Big) : H = 0.
\end{equation}
Similarly, taking
$\phi^h = \frac{1}{\epsilon} (\mbox{exp}(\epsilon H)u^h - u^h)$ 
in (\ref{equilib}), by frame indifference of $W$ we get:
$$\int_{S^h} f^h\cdot Hu^h =\lim_{\epsilon\to 0} \frac{1}{\epsilon}
\int_{S^h} f^h \cdot(\mbox{exp}(\epsilon H)u^h - u^h)
= h \lim_{\epsilon\to 0} \frac{1}{\epsilon}
\Big(J^h(\mbox{exp}(\epsilon H)u^h) - J^h(u^h)\Big) = 0.$$
Now, for any sequence of skew-symmetric matrices $F^h$ we have:
\begin{equation}\label{r2}
\begin{split}
 \int_S f\cdot &Q^h F^h V^h = \frac{1}{he^h} \int_{S^h} f^h\cdot
Q^h F^h \Big((Q^h)^Tu^h - c^h - \mbox{id}\Big)\\
& = \frac{1}{he^h} \int_{S^h} f^h\cdot \big( Q^hF^h(Q^h)^T\big) u^h
- \frac{1}{he^h}\int_{S^h} f^h~\mbox{d}z \cdot Q^h F^hc^h
- \frac{1}{he^h}\int_{S^h} f^h\cdot Q^hF^hz~\mbox{d}z\\
& = - \frac{h}{\sqrt{e^h}}\int_{S} f\cdot Q^hF^hx~\mbox{d}x,
\end{split}
\end{equation}
where the first two terms in the second line above vanish by 
taking $H=Q^hF^h(Q^h)^T\in so(3)$ in (\ref{r1}), and by
the normalization of $f^h$.  Passing to the limit with $h\to 0$
in (\ref{r2}), where $F^h=F$, we see that:
$-\int_S f\cdot \bar Q F V = \lim_{h\to 0} h/\sqrt{e^h} 
\int_S f\cdot Q^h Fx~\mbox{d}x$.
This implies (\ref{r0}).

Clearly, (\ref{r0}) is also equivalent to $0=B:\bar Q F = \bar Q^T B:F$
for all $F\in so(3)$, which means exactly that
$\bar Q^T B$ is a symmetric matrix.

To prove the other equivalent formulation of (\ref{r0}), notice that:
$$\int_S f\cdot \bar Q Fx = \int_S \bar Q^T f\cdot Fx =
-c_F\cdot \int_S \bar Q^T f\times x = -c_F\int_S f\times \bar Qx,$$
where $c_F\in\mathbb{R}^3$ is such that $Fx = c_F\times x$ 
for all $x\in\mathbb{R}^3$.
Since there is a one to one correspondence between vectors $c_F$ and skew
matrices $F$, the proof is achieved.
\end{proof}

Define now the set of the rotation equilibria:
$$\mathcal{M}=\{\bar Q\in SO(3); ~ \mbox{skew }(\bar Q^T B) = 0\}.$$
Our goal is to derive the third Euler-Lagrange equation, with respect to
the variations of $\bar Q$ in $\mathcal{M}$. For $\bar Q\in\mathcal{M}$, 
let $F\in so(3)$ be such that:
\begin{equation*}\label{r3}
\bar Q F = \lim_{n\to\infty} \frac{\bar Q_n - \bar Q}{\|\bar Q_n - \bar Q\|},
\end{equation*}
for some $\bar Q_n\in \mathcal{M}$ converging to $\bar Q$. Clearly, the above
implies that:
\begin{equation}\label{r4}
\mbox{skew } (F\bar Q^T B) = 0.
\end{equation}

\begin{lemma}\label{lem_EL3}
Under the hypothesis of Theorem \ref{th-main} or Theorem \ref{th-main2}, 
assume moreover that:
\begin{equation*}
\lim_{h\to 0} \frac{Q^h - \bar Q}{\|Q^h - \bar Q\|} = \bar Q H,
\quad \mbox{ with } \quad \mathrm{skew }(H\bar Q^T B)\neq 0.
\end{equation*}
Then for every $F\in so(3)$ satisfying (\ref{r4}) there holds:
\begin{equation*}\label{EL3}
\int_S f\cdot \bar Q F V~\mathrm{d}x = 0.
\end{equation*}
\end{lemma}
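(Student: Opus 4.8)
The plan is to test the balance law (\ref{r2}), which holds for an arbitrary skew matrix field $F^h$, against the first-order expansion of $Q^h$ about $\bar Q$, and to use the nondegeneracy hypothesis only in order to identify a finite limiting scale.

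First I would rewrite, for any fixed $F\in so(3)$, the torque-type integral. Since the linear functional $G\mapsto\int_S f\cdot Gx~\mbox{d}x$ on $\mathbb R^{3\times3}$ is represented by $B$ (its restriction to $SO(3)$ being $g$), and since $F$ is skew,
$$\int_S f\cdot Q^hFx~\mbox{d}x = B:(Q^hF) = F:\big((Q^h)^TB\big) = F:\mathrm{skew}\big((Q^h)^TB\big).$$
Because $\bar Q\in\mathcal M$, i.e. $\mathrm{skew}(\bar Q^TB)=0$, the right-hand side equals $F:\mathrm{skew}\big((Q^h-\bar Q)^TB\big)$. Now I would insert the hypothesis $\|Q^h-\bar Q\|^{-1}(Q^h-\bar Q)\to\bar QH$, where $H\in so(3)$ since $Q^h,\bar Q\in SO(3)$; this gives $(Q^h-\bar Q)^T=\|Q^h-\bar Q\|\big(-H\bar Q^T+o(1)\big)$ and hence
$$\int_S f\cdot Q^hFx~\mbox{d}x = \|Q^h-\bar Q\|\,\big(-F:\mathrm{skew}(H\bar Q^TB)+o(1)\big).$$
Substituting this into (\ref{r2}) with $F^h\equiv F$, and abbreviating $\lambda_h:=\frac{h}{\sqrt{e^h}}\|Q^h-\bar Q\|\ge0$, one arrives at
$$\int_S f\cdot Q^hFV^h = \lambda_h\big(F:\mathrm{skew}(H\bar Q^TB)+o(1)\big).$$

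Next I would pass to the limit. For each fixed $F$ the left side tends to $\int_S f\cdot\bar QFV$, since $Q^h\to\bar Q$ and $V^h\to V$ in $L^2(S)$ by Lemma \ref{approx}. Testing first with the special matrix $F_0:=\mathrm{skew}(H\bar Q^TB)$, which is skew and, by the standing assumption, nonzero, the bracket becomes $|F_0|^2+o(1)\to|F_0|^2>0$, so $\lambda_h$ converges to a finite limit $\lambda\ge0$. Then for an arbitrary $F\in so(3)$ one concludes
$$\int_S f\cdot\bar QFV = \lambda\,F:\mathrm{skew}(H\bar Q^TB).$$
Finally, for $F$ satisfying (\ref{r4}) I would check that $F:\mathrm{skew}(H\bar Q^TB)=0$: setting $M:=\bar Q^TB$, which is symmetric as $\bar Q\in\mathcal M$, condition (\ref{r4}) reads $FM=-MF$, and since $F$ is skew, $F:\mathrm{skew}(HM)=F:(HM)=-\mathrm{tr}(FHM)$, while the cyclic trace identities together with $M^T=M$, $H^T=-H$, $F^T=-F$ and $FM=-MF$ give $\mathrm{tr}(FHM)=\mathrm{tr}(MHF)=\mathrm{tr}(FMH)=-\mathrm{tr}(MFH)=-\mathrm{tr}(FHM)$, hence $\mathrm{tr}(FHM)=0$. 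This yields $\int_S f\cdot\bar QFV=0$, the desired third Euler--Lagrange equation.

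The one genuinely delicate point is the identification of the scaling factor $\lambda_h$: one must exploit the balance law for \emph{all} skew $F$, not merely those compatible with the constraint (\ref{r4}), and it is precisely here — through the special choice $F_0$ — that the nondegeneracy hypothesis $\mathrm{skew}(H\bar Q^TB)\neq0$ is used. Everything else reduces to elementary linear algebra and the convergences already recorded in Lemma \ref{approx}.
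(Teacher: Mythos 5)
Your proof is correct, and it takes a genuinely different route from the paper. The paper's strategy is a projection argument: given $F$ satisfying (\ref{r4}), it constructs a corrected sequence $F^h\in so(3)$ \emph{exactly} tangent to the torque constraint, namely $F^h = F - \frac{(Q^h)^TB:F}{|\mathrm{skew}((Q^h)^TB)|^2}\,\mathrm{skew}((Q^h)^TB)$ whenever $Q^h\notin\mathcal M$, so that $\int_S f\cdot Q^hF^hx = 0$ for every $h$; the right-hand side of (\ref{r2}) then vanishes identically, and the nondegeneracy hypothesis is invoked to prove that the correction term tends to $0$, i.e. $F^h\to F$. You instead keep the variation $F^h\equiv F$ fixed and analyze the \emph{scaling} of the right-hand side of (\ref{r2}), expanding $(Q^h-\bar Q)^T = \|Q^h-\bar Q\|(-H\bar Q^T+o(1))$ to extract the representation $\int_S f\cdot\bar QFV = \lambda\, F:\mathrm{skew}(H\bar Q^TB)$ valid for \emph{every} $F\in so(3)$, with $\lambda = \lim_h\frac{h}{\sqrt{e^h}}\|Q^h-\bar Q\|\ge0$; the nondegeneracy hypothesis is used to guarantee, via the test matrix $F_0=\mathrm{skew}(H\bar Q^TB)\neq0$, that this finite limit exists. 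Both proofs then close with essentially the same cyclic-trace computation to show the relevant inner product vanishes when (\ref{r4}) holds. Your route is arguably cleaner in that it avoids the case split $Q^h\in\mathcal M$ vs.\ $Q^h\notin\mathcal M$ (there is no denominator $|\mathrm{skew}((Q^h)^TB)|$ that could vanish), and it yields as a byproduct the explicit Lagrange-multiplier identity $\int_S f\cdot\bar QFV = \lambda\, F:\mathrm{skew}(H\bar Q^TB)$ for the torque functional, which is stronger than the stated conclusion.
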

\begin{proof}
We will find a sequence $F^h\in so(3)$, converging to $F$ and such that,
for all $h$:
\begin{equation}\label{r5}
\int_S f\cdot Q^h F^hx ~\mbox{d}x = 0.
\end{equation}
In view of (\ref{r2}) this will prove the lemma.
Existence of such approximating sequence $F^h$ is  guaranteed by the assumed
nondegeneracy condition: $\mbox{skew } (H\bar Q^T B) \neq 0$.

Firstly, notice that for $Q^h\in\mathcal{M}$ one can take $F^h = F$. 
Otherwise, define:
$$F^h = F - \frac{(Q^h)^TB:F}{|\mbox{skew }((Q^h)^TB)|^2} 
~\mbox{skew }((Q^h)^TB).$$
Then:
\begin{equation*}
\begin{split}
\int_S f\cdot Q^h F^hx ~\mbox{d}x & = B: Q^hF^h = (Q^h)^T B: F^h\\
& = (Q^h)^T B: F - \frac{(Q^h)^TB:F}{|\mbox{skew }((Q^h)^TB)|^2} ~
(Q^h)^T B: \mbox{skew }\big((Q^h)^TB\big) = 0,
\end{split}
\end{equation*}
and moreover:
\begin{equation*}
\begin{split}
\lim_{h\to 0} |F^h - F| &= \lim_{h\to 0}\frac{|(Q^h)^TB:F|}
{|\mbox{skew }((Q^h)^TB)|} =  \lim_{h\to 0}\frac{|(Q^h)^TB:F - \bar Q^TB:F|}
{|\mbox{skew }((Q^h)^TB - \bar Q^TB)|}\\
&= \lim_{h\to 0} \left|\left(\frac{Q^h-\bar Q}{\|Q^h-\bar Q\|}\right)^T
  B:F\right| / \left|\mbox{skew }\left(\frac{Q^h-\bar Q}
{\|Q^h-\bar Q\|}\right)^T B\right|
= \frac{|H^T \bar Q^T B: F|}{|\mbox{skew }(H^T \bar Q^T B)|} = 0.
\end{split}
\end{equation*}
The last expression above equals to $0$ 
because of the nullity of its numerator:
$$H^T\bar Q^T B: F = \bar Q^T B: HF = \bar Q^T B: (HF)^T = 
\bar Q^T B: FH = -F\bar Q^TB:H = 0,$$ 
where we have used that $\bar Q^T B$ is symmetric and (\ref{r4}).
\end{proof}

\medskip

\end{document}